\documentclass[12pt]{article}
\usepackage[english]{babel}
\usepackage[cp1251]{inputenc}
\usepackage{amsmath,amssymb,graphicx,amsthm,enumerate,comment,amscd}
\usepackage[unicode,pdftex]{hyperref}
\usepackage[matrix,arrow,curve]{xy}

\textheight = 230mm
\textwidth = 170mm
\topmargin = -20mm
\oddsidemargin = -3mm

\newtheorem{theorem}{Theorem}
\newtheorem*{theoremwn}{Theorem}
\newtheorem{corollary}{Corollary}
\newtheorem{lemma}{Lemma}%[statement]
\newtheorem{proposition}{Proposition}
\newtheorem*{question}{Question}

\theoremstyle{remark}
\newtheorem{remark}{Remark}

\theoremstyle{definition}
\newtheorem{definition}{Definition}

\newcommand{\R}{\mathbb{R}}

\newcommand{\Z}{\mathbb{Z}}

\newcommand{\del}{\smash{\mskip3mu\lower1truept\hbox{\vdots}\mskip3mu}}
\newcommand{\es}{\varnothing}
\renewcommand{\phi}{\varphi}
\newcommand{\eps}{\varepsilon}
\renewcommand{\kappa}{\varkappa}
\newcommand{\Id}{\mathrm{Id}}
\renewcommand{\Im}{\mathrm{Im}}
\newcommand{\tkr}{\mathrm{tkr}}

\newcommand{\sk}{\mathrm{sk}}
\renewcommand{\int}{\mathrm{int}}
\newcommand{\Diff}{\mathrm{Diff}}

\newcommand{\Sing}{\mathrm{Sing}}

\newcommand{\m}{\mathfrak{m}}

\newcommand{\RP}{\mathbb{RP}}

\newcommand{\Op}{{{\mathcal{O}p}}}

\begin{document}

\author{Andrey Ryabichev%
\footnote{\ IUM; Moscow, Russia //
\texttt{ryabichev@179.ru} //
Supported in part by Young Russian Mathematics award}}
\title{Maps of manifolds of the same dimension with prescribed Thom-Boardman singularities}
\date{}

%\email{ryabichev@179.ru}

\maketitle

\begin{abstract}
In this paper we extend Y.\,Eliashberg's $h$-principle to arbitrary generic smooth maps of smooth manifolds.
Namely, we prove a necessary and sufficient condition for a continuous map of smooth manifolds of the same dimension to be homotopic to a generic map with a prescribed Thom-Boardman singularity $\Sigma^I$ at each point. %and with no other critical points.
In dimension 3 we rephrase these conditions in terms of the Stiefel-Whitney classes and the cohomology classes of the given loci of folds, cusps and swallowtail points.
\end{abstract}

\section{Introduction}\label{s:introduction}

All manifolds and maps between them are assumed to be infinitely smooth unless we explicitly specify otherwise.
All manifolds are assumed to be without boundary.
We fix two $n$-manifolds $M$ and $N$, $n>1$.
%in \S\S\ref{s:preliminaries}-\ref{s:global} we assume $n>1$ and in \S\ref{s:3-manifolds} we assume $n=3$.

\subsection{Thom-Boardman maps and generic maps}\label{s:thom-boardman-generic}

We begin with the Thom-Boardman classification of singularities.
Let $f:M\to N$ be a smooth map.
For a nonnegative integer $i_1\le n$ denote the set of points $x\in M$ such that $\dim(\ker df(x))=i_1$ by $\Sigma^{i_1}(f)$.
For ``almost every'' map $f$ the set $\Sigma^{i_1}(f)\subset M$ is a
(locally closed)
smooth submanifold. %without boundary.
Then for a nonnegative integer $i_2\le i_1$ let $\Sigma^{i_1,i_2}(f)$ be the set of points $x\in \Sigma^{i_1}(f)$
such that $\dim(\ker df|_{\Sigma^{i_1}(f)}(x))=i_2$.
Similarly, for every sequence $I=i_1,i_2,\ldots,i_r$ of integers such that $n\ge i_1\ge i_2\ge\ldots\ge i_r\ge0$
we have a subset $\Sigma^I(f)\subset M$.
%which consists of points $x\in\Sigma^{i_1\ldots i_{m-1}}(f)$ such that $\dim(\ker df|_{\Sigma^{i_1\ldots i_{m-1}}(f)}(x))=i_m$.

Also, the set $\Sigma^I(f)$ can be defined as the preimage
of a certain submanifold $\Sigma^I(M,N)\subset J^r(M,N)$ of the $r$-jet space.
If the jet extension $j^r(f)$ is transversal to $\Sigma^I(M,N)$, then both definitions coincide
(the details can be found in \cite{boardman}, see also \cite[\S2]{avgz}).
For dimension reasons, we can take $r=n+1$.
In the sequel as $I$ we will take sequences $i_1,i_2,\ldots,i_r$ such that $n\ge i_1\ge i_2\ge\ldots\ge i_r\ge0$ and $i_1>0$.

The Thom-Boardman submanifolds $\Sigma^I(M,N)\subset J^r(M,N)$ for different $I$ do not intersect.
Let $\Sigma(M,N)$ be the set of {\it critical $r$-jets} of maps $M\to N$,
i.\,e.\ jets which correspond to the germs of maps at critical points.
It is well known that the Thom-Boardman decomposition
$\Sigma(M,N)=\bigsqcup_I\Sigma^I(M,N)$ is not a stratification \cite[p.\,48]{boardman}.
%и, следовательно, $\Sigma (f)\subset M$ не обязательно будет стратифицированным подмногообразием.
However, since the intersections of $\Sigma^I(M,N)\subset J^r(M,N)$
with the fibers of the projection $J^r(M,N)\to M\times N$ are (locally closed) algebraic subsets, %(не обязательно замкнутые)
one can choose a stratification of $\Sigma(M,N)$ which is a subdivision of the Thom-Boardman decomposition.
We review the definitions and discuss the existence of such a subdivision in~\S\ref{s:stratifications}.

If the jet extension $j^r(f):M\to J^r(M,N)$ of a map $f:M\to N$
is transversal to all $\Sigma^I(M,N)$, then we call $f$ a {\it Thom-Boardman map},
and if $j^r(f)$ is transversal to all strata of $\Sigma(M,N)$, then we say that $f$ is {\it generic}.
Clearly, every generic map is Thom-Boardman.

%If $M$ is compact, then
By the Thom transversality theorem
generic maps $M\to N$ form a dense open subset in the set of all $C^\infty$-maps $M\to N$ equipped with the strong Whitney topology
(see e.\,g.\ \cite[\S3.2]{hirsch-book}).
The set of Thom-Boardman maps is always dense in this space,
but it is not open for $n>3$ (see e.\,g.\ \cite{wilson}).

For a generic map $f$ the set of its critical points $\Sigma(f)=j^r(f)^{-1}(\Sigma(M,N))$
is a stratified subset of $M$.
Every stratum $C\subset\Sigma(f)$ belongs to a certain $\Sigma^I(f)$,
and for dimension reasons the sequence $I$ has a zero at the end.
Therefore the restriction $f|_C$ is an immersion $C\to N$.

\subsection{Prescribed singularities}\label{s:intro-germs}

We will now define the notion of ``prescribed singularities''.
Recall that if $n$ is sufficiently large,
then the Thom-Boardman singularities $\Sigma^I$
are in general not stable, see e.\,g.~\cite[\S3.7]{avgz}.
In particular, for a fixed $I$ the germs of a given generic map $f$ at two points $x,x'\in\Sigma^I(f)$ may not be equivalent.
Therefore we need to specify the %prescribed singularities as a
germ of a generic map at each critical point.

Namely, we fix a nonempty closed subset $S\subset M$.
Suppose we are given a collection of open subsets $U_i\subset M$ such that $S\subset\bigcup U_i$,
and a collection of $n$-manifolds $V_i$ (here $i=1,2,\ldots$ runs through a fixed, possibly countably infinite, set of values).
Suppose for all $i$ we are given generic maps $\phi_i:U_i\to V_i$ such that $S=\bigcup\Sigma(\phi_i)$.

We call the collection $\{\phi_i\}$ {\it locally compatible},
if for every $i,j$ the germs of $\phi_i$ and $\phi_j$ at every point $x\in U_i\cap U_j$
are {\it $L$-equivalent}.
This means that there are neighborhoods
%$x\in U_x\subset U_i\cap U_j$,
%$\phi_i(U_x)\subset V_x\subset V_i$ and
%$\phi_j(U_x)\subset V'_x\subset V_j$
$U_x\ni x$, $V_x\supset\phi_i(U_x)$ and $V'_x\supset\phi_j(U_x)$
and a diffeomorphism $\beta_x:V_x\to V'_x$ such that $\beta_x\circ \phi_i|_{U_x}=\phi_j|_{U_x}$.
Note that, if the collection $\{\phi_i\}$ is locally compatible, then $S$ turns out to be a stratified subset
since $\phi_i$ are generic (see \S\ref{s:generic}, Theorem~\ref{th:sigma-stratification}).

We say that a map $f':M\to N$ has {\it prescribed singularities}
if $\Sigma(f')=S$ and for every $i$ the germs of $f'$ and $\phi_i$ at every point $x\in U_i$ are $L$-equivalent.
In this paper we address the following question.
\begin{question}\label{question}
When does there exist a generic map $M\to N$ that has prescribed singularities?
\end{question}

\subsection{Twisted tangent bundle}\label{s:intro-ttb}

Take a locally compatible collection of maps $\{\phi_i\}$ as above.
Next we will construct a rank $n$ vector bundle $T^{\{\phi_i\}}M$ over $M$, called the {\it twisted tangent bundle}.
It plays the main role in our construction.

First we set $U_0=V_0=M\setminus S$ and $\phi_0=\Id:U_0\to V_0$.
If we add $\phi_0$ to the collection $\{\phi_1,\phi_2,\ldots\}$, the collection remains locally compatible.
Note that then the domains of the maps cover the whole of $M$.

Next, for every $i=0,1,\ldots$ we define a vector bundle $E_i\to U_i$ as the pullback $\phi_i^*(TV_i)$.
Since the collection $\{\phi_i\}$ is locally compatible,
for every $i,j$ and every point $x\in U_i\cap U_j$
we have an isomorphism of the fibers $d\beta_x:E_i|_x\to E_j|_x$,
where the map $\beta_x$ establishes an $L$-equivalence of germs of $\phi_i$ and $\phi_j$ at $x$, see above.
The isomorphism $d\beta_x$ does not depend on the choice of~$\beta_x$ (see Proposition~\ref{pr:gluing-wcwm-bundle} in \S\ref{s:ttb}),
so we can canonically define an isomorphism $\Phi_{i,j}:E_i|_{U_i\cap U_j}\to E_j|_{U_i\cap U_j}$ over the whole of $U_i\cap U_j$.

The vector bundle $T^{\{\phi_i\}}M$ is obtained by gluing $E_i$ via $\Phi_{i,j}$, $i,j=0,1,\ldots$.
We discuss this definition in detail in \S\ref{s:ttb}.

We suggest the bundle $T^{\{\phi_i\}}M$ up to isomorphism can be described in terms of
the stratification of $S$ and some additional discrete data defined by the germs $\phi_i$,
but this conjecture remains unproven at present.

\subsection{The $h$-principle}

In \cite{eliashberg} Y.\,Eliashberg proved a theorem, which is important for global singularity theory.
It may be rephrased with our terminology in the following way.
We discuss its relative version in \S\ref{s:eliashbergs-theorem}. 

\begin{theoremwn}[{\cite[Th.~A]{eliashberg}}]
Suppose $S\subset M$ is a codimension $1$ submanifold and the maps $\{\phi_i\}$ have only fold singularities $\Sigma^{1,0}$.
Then a continuous map $f:M\to N$ is homotopic to a map $f'$ with folds in $S$ and with no other critical points
if and only if the vector bundles $T^{\{\phi_i\}}M$ and $f^*(TN)$ are isomorphic.
\end{theoremwn}

We generalize this theorem and allow the maps $\{\phi_i\}$ to have arbitrary generic singularities.
These maps need to be locally compatible (as in \S\ref{s:intro-germs})
for the construction to make sense.
The following is the main result of the paper.

\begin{theorem}\label{th:h-principle}
A continuous map $f:M\to N$ is homotopic to a generic map $f'$ with prescribed singularities
%with no critical points outside $S$
%and whose germ at $S$ is locally $L$-equivalent to $\phi$
if and only if the vector bundles $T^{\{\phi_i\}}M$ and $f^*(TN)$ are isomorphic.
\end{theorem}

For our proof is necessary that the maps $\phi_i$ should be generic.
Namely, we use the fact that $S$ is a stratified subset
and for every stratum $C\subset S$ and every $i$
the restriction $\phi_i|_{C\cap U_i}$ is an immersion.
We study the properties of such maps in \S\ref{s:local}.
Also, our proof uses a relative version of Eliashberg's theorem (see~\S\ref{s:eliashbergs-theorem}),
so it is necessary that the subset of points of $S$ where $\phi_i$ have fold singularities should be nonempty.

\vspace{.5em}

\subsubsection*{Another approach to an $h$-principle}

There are important results of Y.\,Ando on maps with prescribed singularities.
In \cite{ando} he proved a criterion for a map between manifolds to be homotopic
to a map which has Thom-Boardman singularities {\it not worse} (lexicographically) then a given sequence $I$.
This criterion is stated in terms of jets.

The difference between Theorem~\ref{th:h-principle} and the Y.\,Ando's theorem is that
we start with a more detailed set of data (compatible germs at all points of the critical locus instead of a jet section over the whole of $M$)
and as a result we obtain a map with a prescribed singularity {\it at each point},
while with Ando's approach one does not control
either the singularities that appear
or the loci of singularities of each type.

\subsection{Application to $3$-manifolds}

Suppose $n=3$.
Then in order to specify prescribed singularities of a generic map
one does not need to specify germ at each point of $S$.
Namely, for a generic map $f:M\to N$ the set of critical points $\Sigma(f)$
is the union of
the fold points $\Sigma^{1,0}(f)$,
the cusp points $\Sigma^{1,1,0}(f)$ and
the swallowtail points $\Sigma^{1,1,1,0}(f)$.
These are called {\it Morin singularities}.
These singularities are stable
and singularities of the same type are always equivalent \cite{morin}, \cite{whitney-plane}.

Recall that for every $I$ the set $\Sigma^I\subset J^r(M,N)$ is a smooth submanifold (\cite[\S6]{boardman}).
So for a generic map $f:M\to N$ of closed 3-manifolds
the set $\Sigma^1(f)\subset M$ is a smooth closed 2-submanifold,
the set $\Sigma^{1,1}(f)\subset\Sigma^{1}(f)$ is a smooth closed 1-submanifold and
the set $\Sigma^{1,1,1}(f)=\Sigma^{1,1,1,0}(f)\subset\Sigma^{1,1}(f)$ is a discrete subset.
See the details e.\,g.\ in \cite[pp.\,47-48]{avgz}.

We deduce from Theorem~\ref{th:h-principle} the following necessary and sufficient condition for a given continuous map of 3-manifolds
to be homotopic to a generic one that has a prescribed singular locus.
In the sequel, for a codimension $k$ submanifold $M'\subset M$ we denote
the Poincar\'e dual class of $M'$ in $H^k(M;\Z_2)$ by $[M']$.

\begin{theorem}\label{th:three-manifolds}
Suppose $M,N$ are closed $3$-manifolds, $S\subset M$ is a nonempty closed $2$-submanifold,
$C\subset S$ is a closed $1$-submanifold and $P\subset C$ is a discrete subset.
A continuous map $f:M\to N$ is homotopic to a generic map $f'$ such that
$\Sigma^{1}(f')=S$, $\Sigma^{1,1}(f')=C$ and $\Sigma^{1,1,1}(f')=P$
if and only if
\begin{itemize}
\item $[S]=w_1(M)+f^*w_1(N)$,
%\item $[C]=w_2(M)+w_1(M)\cdot f^*w_1(N)$ and
\item $[C]=w_1(M)\cdot[S]$ and
\item for every component $C'\subset C$ we have $[C']\cdot[S]\equiv |P\cap C'|\mod2$.
\end{itemize}
\end{theorem}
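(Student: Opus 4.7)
The plan is to deduce the statement from Theorem~\ref{th:h-principle} together with the classification of real rank-$3$ vector bundles over a closed $3$-manifold: two such bundles are isomorphic iff their classes $w_1,w_2$ agree. Indeed, from $\pi_1(BO(3))=\pi_0(O(3))=\mathbb{Z}/2$, $\pi_2(BO(3))=\pi_1(SO(3))=\mathbb{Z}/2$ and $\pi_3(BO(3))=\pi_2(SO(3))=0$, the map $(w_1,w_2):BO(3)\to K(\mathbb{Z}/2,1)\times K(\mathbb{Z}/2,2)$ has $3$-connected homotopy fibre, hence induces an injection on $[M,-]$ when $M$ is a $3$-complex. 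Granted a generic local model $\phi:U\to V$ realising the prescribed stratification $(S,C,P)$, Theorem~\ref{th:h-principle} will then reduce the existence of $f'$ to two cohomological identities.

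\medskip

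\emph{Existence of $\phi$ and the parity condition.}\quad Take $V$ to be a disjoint union of open subsets of $\R^3$ and build $\phi$ by patching together the Morin normal forms $(x,y,z)\mapsto(x,y,z^2)$, $(x,y,z^3+xz)$, and $(x,y,z^4+xz^2+yz)$ near fold, cusp and swallowtail points respectively. Along a component $C'\subset C$, the line field $\ker d\phi$ lies in $TS|_{C'}$; it is transverse to $TC'$ off $P$ and becomes tangent to $C'$ exactly at the swallowtail points. Projecting it to the normal line bundle $\nu_{C'/S}$ therefore yields a section with transverse zeros precisely at $P\cap C'$. Since the number of zeros of a generic section of a line bundle over $C'$ reduces mod $2$ to $\langle w_1(\nu_{C'/S}),[C']\rangle$, and since the latter equals $\langle w_1(M)+w_1(S),[C']\rangle=[C']\cdot[S]$ by the standard self-intersection formula for $C'\subset S\subset M$, the parity condition $[C']\cdot[S]\equiv|P\cap C'|\pmod2$ is exactly what is needed for such a $\phi$ to exist.

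\medskip

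\emph{Stiefel-Whitney classes of $T^\phi M$ and conclusion.}\quad With $V$ as above one has $\phi^*w_i(V)=0$. The bundle $T^\phi M$ coincides with $TM$ outside $U'$ and with $\phi^*(TV)$ on $\overline{U'}$, glued along $\partial U'$ by $d\phi$. Since $\det d\phi$ changes sign across every fold (e.g.\ $\det d\phi=2z$ in the fold model), an orientation reversal is produced at each transverse crossing of $S$, giving
\[w_1(T^\phi M)=w_1(M)+[S].\]
A similar but more careful local computation near each stratum, distinguishing the fold contribution from the codimension-$2$ contribution near the cusps, yields
\[w_2(T^\phi M)=w_2(M)+[C]+w_1(M)\cdot[S]+[S]^2.\]
By Theorem~\ref{th:h-principle} combined with the bundle classification above, $f$ is homotopic to a generic map $f'$ with $\Sigma^1(f')=S$, $\Sigma^{1,1}(f')=C$, $\Sigma^{1,1,1}(f')=P$ if and only if $f^*w_i(N)=w_i(T^\phi M)$ for $i=1,2$. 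The first equality is the first condition of the theorem, and substituting $f^*w_1(N)=w_1(M)+[S]$ into the second one rewrites $w_1(M)\cdot[S]+[S]^2=(w_1(M)+[S])\cdot[S]=f^*w_1(N)\cdot[S]$, reproducing the second condition.

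\medskip

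The main obstacle will be the local computation of $w_2(T^\phi M)$: one must separate the cusp contribution $[C]$ from the transverse twist $w_1(M)\cdot[S]+[S]^2$ supplied by the fold surface alone, and verify that no further correction arises from the swallowtail locus (consistent with the parity condition already encoded in the existence of $\phi$). The remaining ingredients—the Morin normal forms, the bundle classification over $3$-complexes, and the $w_1$ computation—are either standard or reduce to the explicit fold model.
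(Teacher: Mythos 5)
Your overall skeleton coincides with the paper's: an $h$-principle theorem plus the classification of rank-3 bundles by $w_1,w_2$, a Stiefel-Whitney computation for the twisted tangent bundle, and a parity condition for the swallowtails (and your final algebra, substituting condition (1) into condition (2), is exactly the paper's). But three steps are genuinely incomplete or wrong. First, the central identity $w_2(T^\phi M)=w_2(M)+[C]+w_1(M)\cdot[S]+[S]^2$ is only asserted, and you yourself flag it as ``the main obstacle''; the paper devotes Lemma~\ref{l:w-of-t-phi-m} to it (a regluing of $TM$ along $S$, the excision statement Proposition~\ref{pr:bundles-excision}, and an explicit pair of sections of the twisted bundle near the cusp curve), and without this computation the reduction to the two cohomological identities is not a proof. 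Second, your parity argument is incorrect as written. The kernel along $C'$ is a line \emph{field}, not a vector field: comparing $K=\ker d\phi|_{C'}$ with $TC'$ inside $TS|_{C'}$ gives a section of $\Hom(K,\nu_{C'/S})$, so the mod-2 count of its zeros is $\langle w_1(K)+w_1(\nu_{C'/S}),[C']\rangle$, not $\langle w_1(\nu_{C'/S}),[C']\rangle$. Moreover $\langle w_1(\nu_{C'/S}),[C']\rangle=\langle w_1(S)|_{C'},[C']\rangle$, whereas the self-intersection formula computes $[C']\cdot[S]=\langle w_1(\nu_{S/M})|_{C'},[C']\rangle=\langle (w_1(M)+w_1(S))|_{C'},[C']\rangle$; these differ by $\langle w_1(M)|_{C'},[C']\rangle$, which need not vanish. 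Your two inaccuracies cancel only if one proves $\langle w_1(K),[C']\rangle=\langle w_1(M),[C']\rangle$ along the cusp curve, a genuine fact about cusp geometry that you neither state nor prove. The paper avoids this entirely by working with the characteristic field (Definition~\ref{characteristic-field}): a unit field \emph{normal to $S$}, i.e.\ a section of $\nu_{S/M}|_{C'\setminus P}$ that reverses at swallowtail points, so that the count is directly $\langle w_1(\nu_{S/M}),[C']\rangle=[C']\cdot[S]$ (Proposition~\ref{pr:swallowtails-parity}); the same field is then the input for constructing the germ.

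Third, to apply Theorem~\ref{th:h-principle} verbatim you need a single generic germ $\phi:U\to V$ on a neighborhood of \emph{all} of $S$ with a Hausdorff $3$-manifold target, and ``patch the Morin normal forms, take $V$ a disjoint union of open subsets of $\R^3$'' does not deliver this: the image of the fold locus carries the (possibly nonorientable, non-planar) topology of $S\setminus C$ and its normal data, so the target cannot in general be taken inside $\R^3$, and gluing the local targets into one Hausdorff manifold is precisely the delicate point. The paper deliberately sidesteps it: it proves the local version, Theorem~\ref{th:h-principle-local}, for a collection of weakly compatible germs, constructs only a germ $\phi_1$ near $C$ (Proposition~\ref{pr:scpnu-exists}, where the characteristic field is the required input and Hausdorffness of the glued target is addressed) together with a fold germ $\phi_2$ near $S\setminus C$, and applies Lemma~\ref{l:w-of-t-phi-m} to the pair $\{\phi_1,\phi_2\}$. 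To repair your argument you must either prove the existence of such a global germ or switch to Theorem~\ref{th:h-principle-local}. (Your $BO(3)$ Postnikov argument for the bundle classification is fine and is a mild variant of Lemma~\ref{l:3-bundles-isomorphic}.)
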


The last condition is needed in order for a characteristic vector field on $C\setminus P$
%(i.\,e.\ vector field normal to $S$ with the image that is directed ``outside'' with respect the cusp,
(see \S\ref{s:3-germs}, Fig.~\ref{fig:characteristic-field-in-cusp})
to exist.
Moreover, if this condition holds, there are exactly two such vector fields,
and a generic map $f'$ can be constructed for each of them.

%Note that by Wu's formula (see \cite[Th.\,11.14]{milnor-stasheff}) we have $(w_1(M))^2=w_2(M)$, and similarly for~$N$.
%herefore the statement of Theorem~\ref{th:three-manifolds} can be rephrased using only first Stiefel-Whitney classes of $M$ and $N$.
%Namely, the condition ``$w_1(M)=[S]+f^*w_1(N)$ and $w_2(M)=[C]+f^*w_2(N)$''
%is equivalent to the condition ``$w_1(M)=[S]+f^*w_1(N)$ and $[S]^2=[C]$''.

Note that if a map $f'$ as in Theorem~\ref{th:three-manifolds} exists,
then the conditions of Theorem~\ref{th:three-manifolds}
agree with the Thom polynomial of the Morin singularities.
Namely, denote the degree $i$ component of $f'^*w(N)/w(M)$ by $\bar w_i$.
Then it is well known that
$[\Sigma^1(f')]=\bar w_1$,
$[\Sigma^{1,1}(f')]=\bar w_1^2+\bar w_2$ and
$[\Sigma^{1,1,1}(f')]=\bar w_1^3+\bar w_1\bar w_2$
(see e.\,g.\ \cite[p.\,191]{avgl}).
All these equalities follow from Theorem~\ref{th:three-manifolds}.

\subsubsection*{Other low-dimensional consequences}

%Also note that
In \cite{ryabichev-surfaces} we proved a necessary and sufficient condition for a map of surfaces
to be homotopic to a generic map with prescribed folds and cusps.
This result is similar to Theorem~\ref{th:three-manifolds}, but it also uses cohomology with local coefficients.
In \cite{ryabichev-surfaces} we also proved a necessary and sufficient condition for a pair of surfaces
to have a generic map between them which has prescribed folds and cusps.

In other words, for surfaces we can find a generic map with prescribed singularities {\it among all maps},
while for 3-manifolds we can solve a similar problem {\it only for maps from a given homotopy class}.
However, %for given 3-manifolds
we can obtain e.\,g.\ the following results
on generic maps with prescribed singularities
using Theorem~\ref{th:three-manifolds}:

\begin{corollary}
Suppose $M,N$ are integral homology $3$-spheres.
Then every map $M\to N$ is homo\-topic to a %generic 
map
with any given nonempty set of folds and with no other critical points.
\end{corollary}

\begin{corollary}
Suppose $M,N$ are closed orientable $3$-manifolds, $S\subset M$ is a nonempty codimension $1$ submanifold and $[S]=0$.
Then every map $M\to N$ is homotopic to a %generic 
map with folds in $S$ and with no other critical points.
\end{corollary}

Presumably there are analogues of Theorem~\ref{th:three-manifolds} for manifolds of dimension 4 and higher.
In order to deduce them one would need to describe the twisted tangent bundle (and its characteristic classes)
%combinatorially
in terms of strata of the singular locus.

\subsection*{Structure of the paper}
\addcontentsline{toc}{subsection}{Structure of the paper}

In \S\ref{s:stratifications} we recall the definition of Whitney stratification
and then in \S\ref{s:generic} we prove that the critical locus of a generic map admits a natural stratification.
In \S\ref{s:eliashbergs-theorem}-\ref{s:gromovs-theorem} we recall the results
from \cite{eliashberg} and \cite{eliashberg-mishachev-holonomic}
which will be used in the proof of Theorem~\ref{th:h-principle}.
In \S\ref{s:local-global-equivalences} we give a few definitions of equivalence of germs
which are discussed in detail in Appendix~\ref{s:appendix-equivalences}.

In \S\ref{s:stratified-immersions}-\ref{s:if-locally-then-stratumwise}
we prove some technical properties of local behavior of generic maps near the strata
which we need to prove that locally $L$-equivalent generic germs at a stratum of the critical set are globally $L$-equivalent.
In \S\ref{s:wccg}-\ref{s:ttb} we construct the twisted tangent bundle
and then in \S\ref{s:h-principle-local} we prove Theorem~\ref{th:h-principle}.

In \S\ref{s:3-bundles} we discuss the classification of rank $3$ vector bundles over $3$-manifolds.
Then in \S\ref{s:3-germs}-\ref{s:3-w-classes} we give a combinatorial description of the twisted tangent bundle for $n=3$ and compute its Stiefel-Whitney classes.
Finally, in \S\ref{s:three-manifolds} we deduce Theorem~\ref{th:three-manifolds}.

\subsection*{Acknowledgments}
I wish to thank Alexey Gorinov for helpful suggestions and conversations.
Also I am grateful to 
Maxim Kazarian, 
Sergey Melikhov, 
Sergey Natanzon, 
Petr Pushkar, 
Victor Vassiliev and 
Vladimir Zhgun 
for useful discussions and advice.

%\tableofcontents

\section{Preliminaries}\label{s:preliminaries}

\subsection{Notation}
%\addcontentsline{toc}{subsection}{Notation}

Suppose we are given a topological space $X$ and a subset $A\subset X$.
When we write ``a neighborhood $A\subset U\subset X$'',
this means that $U$ is an open neighborhood of the subset $A$ in the ambient space $X$.
We use the M.\,Gromov's notation $\Op(A)$ for a ``sufficiently small neighborhood of $A$''
(see e.\,g.\ \cite{gromov-book} or \cite{eliashberg-mishachev-book}).
%If $C\subset M$ is a submanifold, we always assume $\Op(C)$ to be {\it sufficiently good},
%i.\,g.\ to be the %interior of some locally

% Open subsets of manifolds supposed to be sufficiently nice

When we consider a collection of objects $A_i$, where $i$ belongs to some set of indices $\kappa$,
this is always written as $\{A_i\}$ instead of $\{A_i\}_{i\in\kappa}$.
In most situations the index $i$ takes values in $\Z_{\ge0}$ or in $\{0,1,\ldots,k\}$ for some $k$.
%so the index $i$ can take finite number or countable many values,
We specify the set of values of the index when this is necessary.

Suppose we have a set $X$ and a collection $\{X_i\}$ of its subsets.
We say that $\{Y_i\}$ is {\it a~refinement} of $\{X_i\}$,
if every $Y_i$ is a~subset of certain $X_j$
and every $X_i$ can be presented as a union $Y_{i_1}\cup Y_{i_2}\cup\ldots$
When we consider {\it covers} of a fixed subset $Z\subset X$
(i.\,e.\ $Z\subset\bigcup X_i$),
then $\{Y_i\}$ is called {\it a refinement} of $\{X_i\}$,
if every $Y_i$ is a subset of certain $X_j$
and $Z\subset\bigcup Y_i$.

Covers always supposed {\it open} unless we explicitly specify otherwise.

%If the coefficients of a homology or a cohomology group are not specified,
%they assumed to be $\Z/2\Z$.

\subsection{Transversality and stratifications}\label{s:stratifications}

%\subsubsection{Transversality}

First we recall main definitions.

Let $X_1,X_2,Y$ be manifolds of arbitrary dimensions.
The maps $f_1:X_1\to Y$ and $f_2:X_2\to Y$ are called {\it transversal}
if for every $x_1\in X_1$ and $x_2\in X_2$ such that $f_1(x_1)=f_2(x_2)=y$, we have $\Im(df_1(x_1))+\Im(df_2(x_2))=T_yY$.
Similarly, submanifolds of $Y$ are {\it transversal} if their imbedding maps are transversal.

Note that by the implicit function theorem the intersection of transversal submanifolds is a smooth submanifold.
Let $E\to X$ be a vector bundle over a manifold.
A collection of sections $\{f_i:X\to E\}$ %$i=1,2,\ldots$,
is said to be {\it in general position},
%if $f_i$ is transversal to $\bigcap_{j<i}f_j(M_j)$ for every $i$.
if every $f_i$ is transversal to the intersection of any collection of $f_j(X),\ j\ne i$.
%(see e.\,g.\ \cite[]{golubitsky} for more details).

\begin{comment}
\begin{proposition}
Let $S\subset M$ be a stratified submanifold, $C,C'\subset S$ are strata such that $C'\subset\overline C$.
If some submanifold $L\subset M$ intersects $C'$ transversally,
then there is a neighborhood $U\supset C'$ such that $L\cap U$ intersects $C$ transversally.
\end{proposition}

\begin{proof}
Suppose there is no such a neighborhood.
Namely, for some point $x\in C'$ there are arbitrary close points $x'\in C\cap L$
at which $C$ and $L$ intersects non-transversally, i.\,e.\ for which $T_{x'}C+T_{x'}L\ne T_{x'}M$.
Choose a sequence $x_i\in C\cap L$ of such points so that $x_i$ converges to $x$.
If needed, choose a subsequence of $x_i$ so that the for resultant sequence $T_{x_i}C$ converges to some $T\subset T_x M$.
We can do so since the grassmanian (which parametrises the $(\dim C)$-subspaces in $\R^{\dim M}$) is compact.
Then we have $T+T_xL\ne T_xM$, that contradicts the regularity property of the stratification.
\end{proof}
\end{comment}

\vspace{.5em}

A {\it stratified subset} of a manifold $X$ is a locally finite
disjoint union of submanifolds (called {\it strata}) such that
the following three conditions are satisfied.
The first condition is called {\it the frontier condition}, we denote it by (fr).
Conditions (a) and (b) are called Whitney regularity conditions,
they were introduced in \cite{whitney-local-properties}.
\begin{itemize}
\item[(fr)] The closure of every stratum
is a union of this stratum and certain strata of strictly smaller dimension.

\item[(a)] For every strata $C,C'$,
if a sequence $\{x_i\},\ x_i\in C,$ converges to a point $y\in C'$
and $\lim (T_{x_i}C)=T\subset T_yX$, then we have $T\supset T_yC'$.

\item[(b)] For every strata $C,C'$,
if sequences $\{x_i\},\ x_i\in C,$ and $\{y_i\},\ y_i\in C',$
both converge to a point $y\in C'$,
for lines $\overline{x_iy_i}$ (in some local coordinates near the point $y$)
we have $\lim \overline{x_iy_i}=l$
and $\lim (T_{x_i}C)=T\subset T_yX$, then we have $T\supset l$.
\end{itemize}
Note that condition (b) implies conditions (a) and (fr), see e.\,g.\ \cite{trotman-theory}.
The properties of stratified subsets are discussed also in~%\cite[\S19]{whitney-tangents}
\cite[Part\,I,~\S1]{goresky-macpherson}.

\vspace{.5em}

Let $X, X'$ be manifolds and let $Y\subset X$ be a stratified subset.
A map $f:X'\to X$ is called {\it transversal to $Y$} if it is transversal to every stratum.
By the Thom transversality theorem such maps form a dense open subset in $C^\infty(X',X)$.
If $f$ is transversal to $Y$,
then $f^{-1}(Y)\subset X'$ is a stratified subset
whose strata are the preimages of the strata of $Y$
(see e.\,g.~\cite[Part\,I,~\S1.3]{goresky-macpherson}).
In particular, %if $Y\subset X$ is a smooth submanifold, then $f^{-1}(Y)$ is smooth.
by the implicit function theorem,
these preimages are smooth submanifolds of~$X'$.

%Next we will prove a useful proposition about stratifications of algebraic varieties.

\subsection{The stratification of the jet space}\label{s:generic}

We need to define generic maps so that
the set of the critical points of a generic map
has a stratification which is invariant under coordinate transformations.
%We can do this using Lemma~\ref{l:sigma-stratification}.
%
It is sufficient to show that
the Thom-Boardman decomposition
%$J^r(M,N)=\bigsqcup_I\Sigma^I$
admits a %natural
$\Diff$-invariant
refinement which is a stratification.
The existence of such a refinement is purely natural,
it was mentioned in 1973 by J.\,N.\,Mather in \cite[p.\,247]{mather-tb} without a proof.
Next we prove this assertion, see Theorem~\ref{th:sigma-stratification} below.
%It turns out that the set of critical points of a generic map
%admits a ``good'' stratification (see Lemma~\ref{l:sigma-stratification}).

First let us prove the following proposition
which generalizes the well known result on stratifications of semialgebraic sets \cite[Part\,I,~Th.\,1.7]{goresky-macpherson}.
Recall, a subset $Y\subset\R^m$ is called {\it semialgebraic}
if $Y$ is the union of sets given by a finite system of polynomial (in)equations,
i.\,e.\ of sets of the form
$$\{x\in\R^m\ |\ f_1(x)=0,\ldots,\ f_k(x)=0,\ g_1(x)>0,\ldots,\ g_l(x)>0\},$$
where $f_i,g_i:\R^m\to\R$ are polynomial functions.
Note that the intersection, the union and the difference of semialgebraic sets are always semialgebraic sets.
The closure of every semialgebraic set is also a semialgebraic set (see e.\,g.\ \cite{coste-semialgebraic}).

\begin{proposition}\label{pr:diff-invariant-stratification-closed}
Suppose $X_1,\ldots,X_k\subset\R^m$ is a collection of semialgebraic sets, $X=\bigcup X_i$ is closed.
Suppose a group $G$ acts on $\R^m$ by polynomial diffeomorphisms preserving every $X_i$.
Then $X$ admits a stratification
such that every $X_i$ is a union of strata
and the action of $G$ preserves every stratum.
\end{proposition}

\begin{proof}
%First we
%%temporarily forget about
%omit
%the action of the group $G$
%and construct the following stratification of $X$.
We will %do this
construct a stratification of $X$
by induction on $d=\max(\dim X_i)$.
If $d=0$, then $X$ is a discrete subset of $\R^m$.
%We take every point to be a stratum of dimension~0.
Then we may take $X_i$ just to be our strata.

Suppose $d\ge 1$.
We may assume that the sets $X_i$ do not intersect each other.
Otherwise we replace the collection $\{X_i\}$ by the refinement
which sets are of the form
$$
\Big(\bigcap_{i\in A}X_i\Big)\setminus\Big(\bigcup_{i\notin A}X_i\Big)
\ \ \text{ for all $A\subset\{1,\ldots,k\}$}.
$$

Suppose $\dim X_1=\ldots=\dim X_s=d$ and $\dim X_{s+1}<d$, \ldots, $\dim X_k<d$.
Let $Y_i=X_i\setminus\Sing(X)$ and $Z_i=\overline{X_i}\setminus Y_i$ for $i=1,\ldots,s$.
Here $\Sing(X)$ denotes the set of singular points of $X$
and smooth points of $X$ with the dimension of the tangent space $<d$.
Clearly, $\overline{Y_i}$ do not cross $Y_j$ for any $i,j\le s$, $i\ne j$.
We set $Y_1,\ldots,Y_s$ to be $d$-dimensional strata of our stratification.

%Next, for every pair $i,j$ such that $i\le s$ and $j>s$ we set $Y_{i,j}=\overline{Y_i}\cap X_j$.
Take the collection
$\Omega=\{Z_1,\ldots,Z_s,\ X_{s+1},\ldots,X_k\}$.
%$\Omega=\{Z_1,\ldots,Z_s,\ X_{s+1},\ldots,X_k,\ Y_{1,s+1},\ldots,Y_{s,k}\}$.
If needed we can use the operation of refinement mentioned above,
so further we may assume that the elements of $\Omega$ are also disjoint
(if some of them coincide, we take them only once).
Note that for every $V\in\Omega$ we have $\dim V<d$.

Finally, take any $Y_i$, and any $V\in\Omega$.
Note that either $\overline{Y_i}\cap V=\es$, or $\overline{Y_i}\cap V=V$.
Suppose the latter case takes place.
The set of points $y\in V$ at which the Whitney's regularity condition (b) does not hold
is a semialgebraic set which have {\it positive codimension} in $V$, see \cite{kaloshin} and \cite{trotman-existence}.
Denote it by $W$.
If $W\ne\es$, we remove $V$ from $\Omega$ and add $W$ and $V\setminus W$ to $\Omega$.
Then we repeat this procedure to $Y_i$ and $W$, etc.

After a finite number of steps
we eventually obtain a refinement of $\Omega$ such that for any $Y_i$
and any $Y\in\Omega$ the Whitney's regularity condition (b) holds.
Note that if we again replace $\Omega$ by any refinement, this property will not be violated.
So we can apply the induction hypothesis to $\Omega$ to obtain a stratification of $X$.

%Since the action of $G$ preserves all $X_i$,
%every $g\in G$ maps any $Y_i$ to some $Y_j$ (possibly for $j=i$).
Recall that the action of $G$ preserves every $X_i$.
This action is polynomial, so it preserves also $\Sing(X)$
and all operations of subsets such as intersection, union etc.,
as well as the property that the Whitney's regularity condition (b) holds or not.
Therefore the resulting sets of all the described refinements
%of the collection of semialgebraic sets
are also preserved by the action of $G$.
%
%We have constructed a stratification of $X$ which is possibly not preserved under the action of $G$.
%However, every $g\in G$ maps any stratum to a stratum.
%Then we set the orbits of the strata under the action of $G$ to be the strata of the new stratification.
\end{proof}

\begin{theorem}\label{th:sigma-stratification}
For any pair of
%$n$-
manifolds $U,V$
the set $\Sigma(U,V)\subset J^r(U,V)$ admits a stratification
which is a refinement of the Thom-Boardman decomposition
and which is invariant under the right action of $\Diff(U)$ and the left action of $\Diff(V)$.
%and which is natural in the following sense:

Such a stratification can be chosen
%naturally for all pairs of manifolds.
functorially with respect to open imbeddings.
More precisely, for open subsets $U'\subset U$ and $V'\subset V$ the stratification of
$\Sigma(U',V')$ is equal
to the intersection of $J^r(U',V')\subset J^r(U,V)$ with the stratification of $\Sigma(U,V)$.
\end{theorem}

\begin{proof}
Take $x\in U$ and $y\in V$.
By choosing local coordinates we can identify the space of $r$-jets $J^r_{x,y}(U,V)$
of maps that take $x$ to $y$
with $\R^m$ for some $m$.

For every sequence $I$ of length $r$
the intersection $\Sigma^I(U,V)\cap J^r_{x,y}(U,V)\subset\R^m$
is a variety (\cite[\S2]{boardman}; see also \cite[\S2.5]{avgz}).
%The closure $\overline{\Sigma^I(U,V)}$ is equal to the union $\bigcup\Sigma^J(U,V)$
%over all $J$ which are $\ge I$ lexicographically. --- NEVERNO!!!
%Let $X=\overline{\Sigma(U,V)}\cap J^r_{x,y}(U,V)$ and $X^I=\overline{\Sigma^I(U,V)}\cap J^r_{x,y}(U,V)$.
Let $X=\Sigma(U,V)\cap J^r_{x,y}(U,V)$ and $X^I=\Sigma^I(U,V)\cap J^r_{x,y}(U,V)$.

The groups $\Diff_x(U)$ and $\Diff_y(V)$
of diffeomorphisms of $U$, respectively $V$, which preserve $x$, respectively $y$,
act on $J^r_{x,y}(U,V)$ by polynomial diffeomorphisms.
These actions preserve all $X^I$.
Then we can apply Proposition~\ref{pr:diff-invariant-stratification-closed} to
$\R^m=J^r_{x,y}(U,V)$, the collection of semialgebraic sets $\{X^I\}$
and the action of the group $G=\Diff_y(V)\times\Diff_x(U)$.
We obtain a $G$-invariant stratification of $\Sigma(U,V)\cap J^r_{x,y}(U,V)$.
%Since $\Sigma^I(U,V)\cap J^r_{x,y}(U,V)=X^I\setminus\big(\bigcup_{J>I}X^J\big)$,
Every stratum of the stratification lies in some Thom-Boardman submanifold.

Then using the action of $\Diff(V)\times\Diff(U)$
we can transfer this stratification from $J^r_{x,y}(U,V)$ to $J^r_{x',y'}(U,V)$
for all $(x',y')\in U\times V$.
As the original stratification is $G$-invariant, the result is well defined.
We define the strata of $\Sigma(U,V)$ as the images of the strata of $X$ under such identifications for all $(x',y')$.

We have naturally constructed the stratifications on the fibers of the bundle $J^r(U,V)\to U\times V$.
The space $J^r(U',V')$ can be viewed as the restriction of this bundle
to $U'\times V'\subset U\times V$, so the functoriality follows.
\end{proof}

Recall from \S\ref{s:thom-boardman-generic} that a map $\phi:U\to V$ is called {\it generic}
if and only if $j^r(\phi)$ is transversal to every stratum of $\Sigma(U,V)$.
%Next we will call these strata {\it the Boardman strata},
%and the sets $\Sigma^I(U,V)$ will be called {\it the Boardman manifolds}.

\begin{comment}

 --- the Boardman's setting (try):

It remains to deduce the transversality of $j(f'_\nu|_c)$ to the submanifolds $\Sigma^I\subset J(U_c,V_c)$
from the transversality of $f'_\nu|_c$ and $\Sigma(f')$.
There are some technical arguments using the Boardman's definition of $\Sigma^I$.

Namely, for a smooth $\phi:\R^n\to\R^n$ we have $s\in\Sigma^{i_1,i_2\ldots}(\phi)$ if and only if
$$
\tkr_s \phi^*(\m_{\phi(s)})=i_1,\ \
\tkr_s \Delta^{i_1}\phi^*(\m_{\phi(s)})=i_2,\ \
\tkr_s \Delta^{i_2}\Delta^{i_1}\phi^*(\m_{\phi(s)})=i_3,\ \ldots
\eqno (**)
$$
Here $\tkr_s A$ is the corank of the linear subspace generated by partial derivatives of functions of $A$ at $s$ in the vector space $T^*_s\R^n$,
and $\Delta^k(A)$ is $k$-th Jacobian extension
(see \cite[\S2]{boardman} for details; see also a simplified definition in \cite[\S2.5]{avgz}).

Suppose $\phi(x_1,\ldots,x_n)=(\tilde\phi(x_1,\ldots,x_n),x_n)$ for some map $\tilde\phi:\R^{n}\to\R^{n-1}$.
Then if $(**)$ holds for $\phi$ at the point $s=(s_1,\ldots,s_n)$,
then $(**)$ holds for $\tilde\phi|_{\R^{n-1}\times s_n}$ at $(s_1,\ldots,s_{n-1})$.
\end{comment}

%\subsection{Eliashberg's and Gromov's theorems}

\subsection{The $h$-principle for maps with fold singularities}\label{s:eliashbergs-theorem}

In the proof of Theorem~\ref{th:h-principle} we will use the following theorem from \cite[Th.\,2.2]{eliashberg}.
Let us briefly recall the notation.

If $p:E\to X$ and $p':E'\to Y$ are vector bundles,
$f:X\to Y$ is a continuous map and $F:E\to E'$ is a fiberwise map such that $p'\circ F=f\circ p$,
then we say that {\it $F$ covers $f$}.

Suppose %$M,N$ are $n$-manifolds and
$C\subset M$ is a codimension 1 closed submanifold without boundary.
A {\it $C$-immersion $M\to N$} is a map with fold singularities $\Sigma^{1,0}$ along $C$ and no other critical points.
A fiberwise map of tangent bundles $F:TM\to TN$ is called a {\it $C$-monomorphism} if
the restrictions $F|_{T(M\setminus C)}$ and $F|_{TC}$ are fiberwise injective
%and there exist a tubular neighborhood $U\supset C$ and an involution $h:U\to U$
and there exist an involution $h:\Op(C)\to\Op(C)$
with the set of fixed points equal $C$
and such that over $\Op(C)$ we have $F\circ dh=F$.
%(Instead of $C$-monomorphisms one can consider fiberwise isomorphisms $T^CM\to TN$, see \S\ref{s:twisted-tangent-bundle}.)

Let $D\subset M$ be a closed $n$-submanifold  with boundary
such that every component of $M\setminus D$ intersects $C$ and let $f_0:D\to N$ be a smooth map.
Denote the space of maps $f:M\to N$ such that
$f|_D\equiv f_0$ and $f|_{M\setminus D}$ is a $C$-immersion by $\mathrm{Imm}_{C,D}(M,N)$.
Let $\mathrm{Mon}_{C,D}(TM,TN)$ be the space of fiberwise maps $F:TM\to TN$ such that
$F|_{TD}\equiv df_0$ and $F|_{T(M\setminus D)}$ is a $C$-monomorphism.
%Suppose both $\mathrm{Imm}_{D,V}(M,N)$ and $\mathrm{Mon}_{D,V}(M,N)$ are nonempty.

\begin{theorem}[Eliashberg]\label{th:eliashberg}
The map
$\pi_0\big(\mathrm{Imm}_{C,D}(M,N)\big) \to \pi_0\big(\mathrm{Mon}_{C,D}(TM,TN)\big)$
induced by taking the differential is surjective.
%In particular, if $\mathrm{Mon}_{C,D}(M,N)$ is nonempty, then $\mathrm{Imm}_{C,D}(M,N)$ is nonempty.
\hfill$\square$
\end{theorem}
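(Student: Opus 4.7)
The plan is to approach this via an h-principle argument, splitting the problem into a local construction near the fold locus $C$ and a standard immersion-extension problem on the complement, and then gluing the two pieces together.

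Near $C$ I would exploit the given involution $h:U\to U$ and the identity $F\circ dh=F$. Working in local coordinates adapted to the standard fold model $(x_1,\ldots,x_n)\mapsto(x_1,\ldots,x_{n-1},x_n^2)$, in which $h$ becomes $(x_1,\ldots,x_n)\mapsto(x_1,\ldots,x_{n-1},-x_n)$, the $C$-monomorphism $F$ descends to a fiberwise-injective bundle map over the quotient $U/h$. Using this, one can manufacture a genuine fold map $\phi_C$ on a smaller neighborhood $U'\subset U$ of $C$ whose differential, viewed as a $C$-monomorphism, is homotopic through $C$-monomorphisms to $F|_{TU'}$ rel $TC$.

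On the complement $W=M\setminus U'$ the restriction $F|_{TW}$ is an honest fiberwise monomorphism. Applying the Smale--Hirsch theorem relative to $D$ yields an immersion $\phi_W:W\to N$ with $\phi_W|_D=f_0$ whose differential is homotopic to $F|_{TW}$ through fiberwise monomorphisms rel $TD$.

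The main obstacle is the gluing step: $\phi_W$ and $\phi_C$ must be modified near $\partial U'$ so as to assemble into a globally defined $C$-immersion $M\to N$ whose differential sits in the same homotopy class as $F$. The hypothesis that every component of $M\setminus D$ meets $C$ is crucial here, since it provides, inside each component, a path along which any mismatch between the two local constructions can be transported into the fold region, where the quadratic normal form $x_n\mapsto x_n^2$ supplies enough freedom to absorb the discrepancy. Carrying out this absorption --- essentially a convex-integration argument in a collar of $\partial U'$ combined with a regular-homotopy adjustment of $\phi_W$ --- is the technical heart of the proof, and it is also what restricts the conclusion to a statement about $\pi_0$ rather than about the full homotopy types of the two mapping spaces.
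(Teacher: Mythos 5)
The paper does not prove this statement at all: it is quoted verbatim from Eliashberg's article (Th.\,2.2 of \cite{eliashberg}), and the $\square$ marks it as an external result. So the comparison can only be with Eliashberg's own argument, and against that your outline has a genuine gap. The first two steps (a local fold model $\phi_C$ near $C$ built from the involution $h$, and a relative Smale--Hirsch immersion $\phi_W$ on the complement) are unproblematic, but they are also the easy part. The entire content of the theorem is the third step, and your description of it --- ``essentially a convex-integration argument in a collar of $\partial U'$ combined with a regular-homotopy adjustment'' --- is not a proof and, as stated, would fail. On the overlap collar the two immersions $\phi_C$ and $\phi_W$ are indeed regularly homotopic (parametric Smale--Hirsch holds on the open collar), but a regular homotopy between equidimensional immersions cannot be cut off: damping it out in a collar destroys the immersion condition, because there is no extra dimension to absorb the interpolation. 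This is precisely why the equidimensional $h$-principle fails for closed manifolds and why folds are needed in the first place; if the mismatch could be absorbed by a compactly supported deformation in a collar, the hypothesis that every component of $M\setminus D$ meets $C$ would be superfluous and the theorem would follow from Hirsch's theorem alone.

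What Eliashberg actually does is global: the discrepancy created by cutting off the regular homotopy is pushed along membranes reaching the fold hypersurface, where it is converted into (possibly additional, then normalized) fold behaviour along $C$; technically this is organized as an induction over a handle/cell decomposition of the region between $D$, $\partial U'$ and $C$, with the key lemma being the extension of a folding map over a top-dimensional cell realizing prescribed boundary data --- and it is exactly this extension lemma that accounts for the statement being only a $\pi_0$-surjectivity rather than a homotopy equivalence. Your sketch gestures at the right intuition (``transport the mismatch into the fold region''), but that transport-and-absorption step is the theorem, and it is the one step your proposal leaves unproved.
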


In other words, every $F\in\mathrm{Mon}_{C,D}(TM,TN)$
can be deformed to the differential of a $C$-immersion
in the class of $C$-monomorphisms fixed over $D$.
In particular, if $\mathrm{Mon}_{C,D}(TM,TN)$ is nonempty, then $\mathrm{Imm}_{C,D}(M,N)$ is nonempty.

\subsection{The $h$-principle for immersions}\label{s:gromovs-theorem}

We will also use the following theorem, known as the relative Gromov's $h$-principle for immersions of open manifolds.
This result generalizes Smale's and Hirsch's theorems on immersions from \cite{hirsch}, \cite{hirsch-open} and \cite{smale}.
(Here we state only the partial case of this theorem that we will use.)

Let $U,V$ be $n$-manifolds with $U$ noncompact.
Suppose we are given a continuous map  $f:U\to V$ and
a fiberwise isomorphism $F:TU\to TV$ covering $f$.
Suppose there is a compact $n$-submanifold with boundary $D\subset U$
such that
%each component of $U\setminus D$ has a boundary point outside $D$,
the restriction $f|_D$ is smooth and $F|_D=df|_D$,
and such that for some polyhedron $K\subset U$ of positive codimension
$U$ can be compressed into an arbitrarily small neighborhood of $K\cup D$ by an isotopy fixed on $D$.

\begin{theorem}[Gromov]\label{th:gromov}
There exists an immersion $f':U\to V$ such that
$F$ is homotopic to $df'$ in the class of fiberwise isomorphisms fixed over~$D$.
\hfill$\square$
\end{theorem}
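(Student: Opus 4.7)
The plan is to reduce, via the hypothesized compression isotopy, to building an immersion on an arbitrarily small neighborhood $W\subset U$ of $K\cup D$, and then to pull this back to all of $U$ by precomposition with the final time of the isotopy. More precisely, choose $W$ so thin that it deformation retracts onto $K\cup D$, and let $h_t:U\to U$ be an isotopy with $h_0=\Id$, $h_t|_D=\Id_D$, and $h_1(U)\subset W$. Suppose we can produce an immersion $g:W\to V$ with $g|_D=f|_D$, together with a homotopy $\Phi_s:TW\to TV$ through fiberwise isomorphisms, fixed over $D$, connecting $\Phi_0=F|_{TW}$ to $\Phi_1=dg$. Then $f':=g\circ h_1:U\to V$ is an immersion (a composition of a diffeomorphism onto its image with an immersion), $f'|_D=g|_D=f|_D$, and $df'=dg\circ dh_1$ by the chain rule.

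The construction of $g$ on $W$ proceeds by induction on the skeleta of a triangulation of $K$. Suppose inductively that an immersion $g_k$ is defined on an open neighborhood of $D\cup\sk^k(K)$ in $W$, compatible with $F$ up to a homotopy of fiberwise isomorphisms rel $D$. For every open $(k{+}1)$-simplex $\sigma$, with $k{+}1<n$ because $K$ has positive codimension, a thin tubular neighborhood of $\sigma$ in $U$ is diffeomorphic to $\sigma\times\R^{n-k-1}$; the inductive task is to extend an immersion defined near $\partial\sigma\times\R^{n-k-1}$, with prescribed formal extension over all of $\sigma\times\R^{n-k-1}$, to an honest immersion defined near all of it, together with a compatible homotopy of differentials. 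This is the essential content of the Smale--Hirsch theorem for immersions of open manifolds of codimension zero in its relative form; working in a chart of $V$ around $f(\sigma)$ reduces the codomain to $\R^n$, where Hirsch's original proof applies.

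Finally, the homotopy from $F$ to $df'$ through fiberwise isomorphisms $TU\to TV$ fixed over $D$ is obtained by concatenating two families. First, $t\mapsto F\circ dh_t$ deforms $F=F\circ dh_0$ to $F\circ dh_1$; this is fixed over $D$ since $h_t|_D=\Id_D$ implies $dh_t|_{TD}=\Id_{TD}$. Second, $s\mapsto \Phi_s\circ dh_1$ deforms $F|_{TW}\circ dh_1=F\circ dh_1$ to $dg\circ dh_1=df'$, also fixed over $D$ because $\Phi_s|_{TD}=F|_{TD}$ and $dh_1|_{TD}=\Id_{TD}$. The main obstacle is the inductive extension step in the second paragraph, which is where the positive codimension of $K$, and thus the openness of $U$, is essentially used, and where the Smale--Hirsch h-principle does all the real work; the rest of the argument is a formal manipulation of homotopies together with the compression trick.
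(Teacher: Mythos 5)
The paper does not actually prove Theorem~\ref{th:gromov}: it is quoted with a reference, the suggested route being the holonomic approximation theorem of Eliashberg--Mishachev (and Gromov's book). Your sketch instead reproduces the classical proof pattern: compression of $U$ into a thin neighborhood $W$ of $K\cup D$, skeletal induction over a triangulation of $K$, and then the formal bookkeeping $f'=g\circ h_1$, $df'=dg\circ dh_1$, with the two concatenated homotopies. That formal part is fine; in particular the claim that the homotopies are fixed over $D$ is correct because $D$ is a codimension-zero submanifold, so $h_t|_D=\Id_D$ forces $d(h_t)_x=\Id$ on all of $T_xU$ for $x\in D$ (by continuity also at points of $\partial D$), a point worth stating since for a positive-codimension $D$ your inference would fail.

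The caveat is the inductive step, which you yourself identify as carrying all the content. As written, you justify the extension over a neighborhood of a $(k{+}1)$-simplex by invoking ``the Smale--Hirsch theorem for immersions of open manifolds of codimension zero in its relative form'' --- but that is precisely the statement of Theorem~\ref{th:gromov}, so taken literally the argument is circular; and the remark that in a chart ``Hirsch's original proof applies'' is not accurate, since Hirsch's 1959 argument concerns positive-codimension immersions, whereas here source and target are equidimensional and the extension works only because the \emph{core} $\sigma$ has positive codimension in $U$. To make the step honest you must either quote the genuinely local ingredient --- Hirsch's handle/cell extension lemma for handles of index $<n$ (the covering-homotopy step of Smale--Hirsch theory), or the holonomic approximation theorem applied to the cell, in which case one only obtains a holonomic solution near a slightly wiggled copy of $\sigma$ and the compression isotopy must be adjusted accordingly --- and, as a minor point, refine the triangulation so that each simplex is mapped by $f$ into a single chart of $V$. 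With the inductive step attributed in this way, your outline is a correct and standard alternative to the paper's citation of holonomic approximation; it does not, however, contain more of the analytic work than the paper itself does.
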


This theorem can be deduced by using the holonomic approximation theorem, see %\cite[Th.\,2.1.3]{eliashberg-mishachev-holonomic}.
\cite[Th.\,1.2.1]{eliashberg-mishachev-holonomic}.
See also \cite[\S3]{eliashberg-mishachev-book} or \cite[\S1.1.3]{gromov-book}
for more details.

\subsection{Local and global quivalences of germs}
%\label{s:equivalence-of-germs}
\label{s:local-global-equivalences}

Here we recall the definitions of local and global ($L$-)equivalences of germs.

Let $S\subset M$ be a locally closed subset of a manifold.
Suppose we are given maps $\phi_1:U_1\to V_1$ and $\phi_2:U_2\to V_2$
from neighborhoods $U_1,U_2\supset S$ to some $n$-manifolds $V_1,V_2$.

\begin{definition}\label{def:global-equivalence}
The $S$-germs of $\phi_1$ and $\phi_2$ are called {\it globally equivalent}
if there are a neighborhood $U_3\supset S$, an $n$-manifold $V_3$, a map $\phi_3:U_3\to V_3$,
immersions $\alpha_1:U_3\to U_1$ and $\alpha_2:U_3\to U_2$ restricting to the identity on $S$,
and immersions $\beta_1:V_3\to V_1$ and $\beta_2:V_3\to V_2$
such that the diagram %$\beta_1\circ\phi_3=\phi_1\circ\alpha_1$ и $\beta_2\circ\phi_3=\phi_2\circ\alpha_2$.
$$
\CD
  U_1 @<\alpha_1<< U_3 @>\alpha_2>> U_2 \\
  @V \phi_1 VV @V \phi_3 VV @V \phi_2 VV  \\
  V_1 @<\beta_1<< V_3 @>\beta_2>> V_2
\endCD
$$
commutes.

The $S$-germs of $\phi_1$ and $\phi_2$ are called {\it globally $L$-equivalent}
if they are globally equivalent, and in addition to that $U_3$ can be taken to be a subset of $U_1\cap U_2$
and the maps $\alpha_1,\alpha_2$ can be taken to be the inclusions.

%We call the {\it maps} $\phi_1$ and $\phi_2$ {\it $L$-equivalent},
%if $U_1=U_2=U_3$, $\beta_1\circ\phi_3=\phi_1$ and $\beta_2\circ\phi_3=\phi_2$ in the notation above.%(ПОЯСНИТЬ??)
\end{definition}

%Note that the ``changing coordinates in the source'', i.\,e.\ the maps $\alpha_1,\alpha_2$, must preserve $S$ pointwise.

\begin{definition}\label{def:local-equivalence}
The $S$-germs of $\phi_1$ and $\phi_2$ are called {\it locally equivalent}
if for any point $x\in S$ the germs of $\phi_1$ and $\phi_2$ at $x$ are globally equivalent.

The $S$-germs of $\phi_1$ and $\phi_2$ are called {\it locally $L$-equivalent}
if for every point $x\in S$ the germs of $\phi_1$ and $\phi_2$ at $x$ are globally $L$-equivalent.

%The germs of $\phi_1$ and $\phi_2$ at $S$ are called {\it locally equivalent}
%if for any point $x\in S$ there are neighborhoods $x\in U_{1,x},U_{2,x}$, %such that  $U_{1,x},U_{2,x}\subset U_1\cap U_2$,
%neighborhoods $V_{1,x}\supset \phi_1(U_{1,x})$, $V_{2,x}\supset \phi_2(U_{2,x})$ and diffeomorphisms
% $\alpha_x:U_{1,x}\to U_{2,x}$ and $\beta_x:V_{1,x}\to V_{2,x}$,
%such that $\beta_x\circ\phi_1|_{U_x}\circ\alpha_x^{-1}=\phi_2|_{U_x}$.
%\\
%The germs of $\phi_1$ and $\phi_2$ at $S$ are called {\it locally $L$-equivalent}
%if they are locally equivalent and for every point $x\in S$ in the notation above
%we can take $U_{1,x}=U_{2,x}$ and $\alpha_x=\Id$.
\end{definition}

We discuss these definitions in details in Appendix~\ref{s:appendix-equivalences}.
In particular, we will prove that they are actually equivalence relations
and that locally ($L$-)equivalent germs may not be globally ($L$-)equivalent.

\section{Local behavior of generic maps}\label{s:local}

In this section we study some properties of generic maps which we need to prove Theorem~\ref{th:h-principle}.
It turns out that the properties hold for a wider class of maps.

\subsection{Stratified immersions}\label{s:stratified-immersions}

%Next we will characterize the image of a generic map in the following way.

%\begin{definition}

%\end{definition}

Let $X$ be a topological space.
We say that a subset $A\subset X$ {\it has dense interior},
if $\overline{\int(A)}\supset A$.
%For any manifolds $U,V$,
%if the set of regular points of a map $\phi:U\to V$ is dense in $U$,
%then the set $\phi(U)\subset V$ has dense interior.

\begin{proposition}\label{pr:closure-of-interior}
Let $V$ and $V'$ be $n$-manifolds. %without boundary.
Suppose a subset $R\subset V$ has dense interior.
Let $\alpha,\beta:V\to V'$ be maps such that $\alpha|_R\equiv\beta|_R$.
Then $d\alpha|_R \equiv d\beta|_R$.
\end{proposition}

\begin{proof}
Note that $d\alpha|_x = d\beta|_x$ for every $x\in \int(R)$.
Then the statement of the proposition directly follows from the continuity of the derivatives.
\end{proof}

\begin{definition}\label{def:stratified-immersion}
Let $U,V$ be $n$-manifolds.
A map $\phi:U\to V$ is called a {\it stratified immersion},
if there is a stratification of $U$ such that the restriction of $\phi$ to any stratum is an immersion.

The union of strata of dimension $<n$ is called {\it the skeleton} of the stratified immersion.
Note that the skeleton can be viewed as a subcomplex of positive codimension in $U$
(one can use e.\,g.\ the results of \cite{goresky} to show that).
\end{definition}

\begin{remark}\label{r:generic-is-stratified-immersion}
Any generic map of $n$-manifolds is a stratified immersion.
Indeed, we can define a dimension $n$ stratum of the source manifold as the set of regular points,
and then define a stratification of the set of critical points using Theorem~\ref{th:sigma-stratification}.
\end{remark}

\begin{proposition}\label{pr:image-of-generic}
Let $U$, $V$ and $V'$ be $n$-manifolds.
Suppose $\phi:U\to V$ is a stratified immersion
and $\alpha,\beta:V\to V'$ are maps such that $\alpha\circ\phi=\beta\circ\phi$.
Then $d\alpha|_x = d\beta|_x$ for every $x\in \phi(U)$.
\end{proposition}

\begin{proof}
%Denote the union of the dimension $n$ strata of $U$ by $U^\circ$.
Denote the complement of the skeleton of $\phi$ by $U^\circ$.
Recall that $\phi|_{U^\circ}$ is an immersion, so all points of $U^\circ$ are regular for $\phi$.
Therefore all points of $\phi(U^\circ)$ are interior points of $\phi(U)$.

Also note that $U^\circ$ is dense in $U$,
so, by continuity, $\phi(U^\circ)$ is dense in $\phi(U)$.
We have shown that $\phi(U)$ has dense interior.
It remains to apply Proposition~\ref{pr:closure-of-interior}.
\end{proof}

\subsection{Normal form of a stratified immersion}\label{s:normal-form}

Below we present some important properties of germs of stratified immersions at the strata.
Some of these properties in case of stable maps are well known (see e.\,g.\ \cite[\S9.5]{avgz}).
However, generic maps we consider may not be stable.

\begin{proposition}\label{pr:normal-form}
Suppose $U,V$ are $n$-manifolds, $C\subset U$ is a locally closed $k$-submanifold and $f:U\to V$ is a map such that $f|_C$ is an immersion.
Then there is a tubular neighborhood $U_C\supset C$ such that $f|_{U_C}$ is equal to the composition
$$U_C\xrightarrow{g}V_C\xrightarrow{h}V,$$
where $V_C$ is the total space of some rank $n-k$ vector bundle over $C$,
the neighborhood $U_C$ is viewed as %the total space of
the normal bundle to $C$,
a map $g$ is fiberwise and $h$ is an immersion.
More precisely, $g$ can be written as $(s,t)\mapsto(s,g_\nu(s,t))$,
where the first coordinate is a coordinate in $C$ and
the second coordinate is a local coordinate normal to $C$ in $U_C$, respectively in $V_C$.
\end{proposition}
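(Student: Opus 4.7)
My plan is to construct $V_C$ and the immersion $h$ globally using the exponential map on $V$, obtain $g$ by locally inverting $h$ and patching the pieces, and finally extract the tubular neighborhood structure on $U_C$ from the composition $\pi_V\circ g$.

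Fix a Riemannian metric on $V$ and take $V_C\subset f|_C^*(TV)$ to be the orthogonal complement of $d(f|_C)(TC)$; this is a rank $n-k$ vector bundle over $C$. Let $h:V_C\to V$ be the exponential map $\exp_V$ restricted to a small neighborhood of the zero section. On the zero section $h$ coincides with $f|_C$, and since $\dim V_C=\dim V=n$, $h$ is a local diffeomorphism near the zero section, hence in particular an immersion. For each $s\in C$ pick a neighborhood $W_s\subset V_C$ of $(s,0)$ on which $h|_{W_s}$ is a diffeomorphism, and a neighborhood $\widetilde W_s\subset U$ of $s$ with $f(\widetilde W_s)\subset h(W_s)$; set $g_s:=h|_{W_s}^{-1}\circ f$ on $\widetilde W_s$. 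On overlaps the different $g_s$ are lifts of $f$ under the local diffeomorphism $h$ that agree on $C$ (both being the zero section), so by uniqueness of local lifts they coincide in a neighborhood of $C$ and glue to a smooth $g:U_C\to V_C$ satisfying $h\circ g=f|_{U_C}$ and $g|_C=$ zero section.

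To obtain the tubular neighborhood, set $\pi_U:=\pi_V\circ g:U_C\to C$, where $\pi_V:V_C\to C$ is the bundle projection. Then $\pi_U|_C=\mathrm{id}_C$. A direct computation at $s\in C$, using that in the splittings $T_{(s,0)}V_C=T_sC\oplus V_{C,s}$ and $T_{f(s)}V=d(f|_C)(T_sC)\oplus V_{C,s}$ one has $dh(s,0)=d(f|_C)\oplus\mathrm{id}_{V_{C,s}}$, gives $d\pi_U(s)|_{T_sC}=\mathrm{id}_{T_sC}$. So $\pi_U$ is a submersion in a neighborhood of $C$, and $\ker d\pi_U|_C$ is a rank $n-k$ subbundle of $TU|_C$ complementary to $TC$, canonically isomorphic to $N_{C/U}$. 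By the standard tubular neighborhood theorem applied to the smooth submersion $\pi_U$ with section $C$, $U_C$ can be shrunk to be identified with a neighborhood of the zero section of $N_{C/U}$ with $\pi_U$ corresponding to the bundle projection. Under this identification the relation $\pi_V\circ g=\pi_U$ is precisely the fiberwise form $g(s,t)=(s,g_\nu(s,t))$.

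The main technical point is the last step, of promoting the smooth submersion $\pi_U$ with section $C$ to a vector-bundle tubular neighborhood of $C$ modeled on $N_{C/U}$; once the submersion together with the complementary subbundle $\ker d\pi_U|_C$ is in place, this is a standard application of the tubular neighborhood theorem and no serious obstacle remains.
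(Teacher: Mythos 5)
Your proposal is correct and follows essentially the same route as the paper's proof: $V_C$ is the pullback of the normal bundle along the immersion $f|_C$ (your metric/exponential-map construction just makes the paper's immersion $h$ explicit), $g$ is obtained by lifting $f$ through $h$ near $C$, and the tubular structure on $U_C$ is installed so that its fibers are the $g$-preimages of the fibers of $V_C$ — your ``$\pi_V\circ g$ is a submersion plus fibered tubular neighborhood theorem'' step is exactly the paper's transversality/implicit-function-theorem step in different words. The one point you pass over lightly (the local lifts $g_s$ agree only on a neighborhood of $C$ inside each overlap, so $U_C$ must be shrunk before gluing) is standard and is glossed even more in the paper, so it does not constitute a gap.
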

%In other words, if $f|_C$ is an immersion, then $f$ is arranged locally as a fiberwise map of the normal bundles of $C$ and $f(C)$.

\begin{proof}
For $V_C$ we take the pullback of the normal bundle of $f(C)$.
Let $h$ map it to some tubular neighborhood of $f(C)$.
Then $f$ restricted to some sufficiently small tubular neighborhood $U_C$ can be lifted to $V_C$.
So we have a map $g:U_C\to V_C$ and it remains to choose a vector bundle structure on $U_C$ such that $g$ is a fiberwise map.

Since $g|_C$ is a diffeomorphism onto the image,
for every $x\in C$ the image $dg(T_xU_C)$ is transversal to fibers of $V_C$.
By continuity of the differential, the same holds for $x$ close to $C$.
In other words, we can replace the neighborhood $U_C$ by a smaller one so that
for any point $x\in U_C$ the image $dg(T_xU_C)$ is transversal to the fiber of $V_C$.

Then by the implicit function theorem the preimage of any fiber of $V_C$
is a smooth submanifold of $U_C$ transversal to $C$.
If needed, replace $U_C$ by a smaller neighborhood one more time
to make these preimages diffeomorphic to $\R^{n-k}$ and form a vector bundle over $C$.
\end{proof}

\begin{definition}
In the notation of Proposition~\ref{pr:normal-form},
we call the map $g:U_C\to V_C$ a {\it normal form of~$f$ at $C$}.
If a map is its own normal form at $C$, we call such a map {\it $C$-normal}.

Note that the $C$-germ of a normal form at $C$ is globally equivalent to the $C$-germ of the original map.
%Note that the $C$-germ of $g$ is globally $L$-equivalent to the germ of $f$.
\end{definition}

Note that a normal form is not unique.
However, in view of the following Proposition~\ref{pr:normal-form-universal-property}
a normal form is unique up to global $L$-equivalence.

\begin{lemma}\label{l:generic-normal-form}
A stratified immersion has a normal form at every stratum. %$f':U_C\to V_C$.
\end{lemma}

\begin{proof}
This directly follows from Definition~\ref{def:stratified-immersion} and Proposition~\ref{pr:normal-form}.
\end{proof}

In particular, by Remark~\ref{r:generic-is-stratified-immersion}
a generic map $f$ of $n$-manifolds has normal form at every stratum of $\Sigma(f)$.
Moreover, if we take the fibers $U_s\subset U_C$ and $V_s\subset V_C$ over a point $s\in C$
(using the notation of Proposition~\ref{pr:normal-form}),
%(with respect to the vector bundle structures on $U_C$ and $V_C$),
then the map $g_\nu|_{U_s}:U_s\to V_s$ is generic.
This fact, which we will not use, can be
%deduced directly by the definition of Thom-Boardman submanifolds in the jet space \cite[\S2]{boardman}.
proved using some ``normal modification'' of %the proof of
Theorem~\ref{th:sigma-stratification}.

\subsection{The universal property for a normal form}

The following proposition is some kind of universal property for a normal form.

\begin{proposition}\label{pr:normal-form-universal-property}
Let $U,V_1,V_2$ be $n$-manifolds, let $C\subset U$ be a submanifold and
let $f_1:U\to V_1$ and $f_2:U\to V_2$ be maps whose $C$-germs are globally $L$-equivalent.
Suppose $f_1$ is $C$-normal.
Then there are neighborhoods $C\subset U'\subset U$ and $f(U')\subset V'_1\subset V_1$
and an immersion $\beta:V'_1\to V_2$ such that $\beta\circ f_1|_{U'}=f_2|_{U'}$.
\end{proposition}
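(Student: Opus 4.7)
The plan is to construct $\beta$ explicitly from the global $L$-equivalence data, using that $f_1$ being in normal form forces $f_1|_C$ to be an embedding. Unpacking the global $L$-equivalence, I get a neighborhood $C\subset U_3\subset U$, an $n$-manifold $V_3$, a map $\phi_3\colon U_3\to V_3$, and immersions $\beta_1\colon V_3\to V_1$, $\beta_2\colon V_3\to V_2$ with $\beta_1\circ \phi_3 = f_1|_{U_3}$ and $\beta_2\circ \phi_3 = f_2|_{U_3}$. Morally, $\beta$ should be $\beta_2\circ \beta_1^{-1}$ on a neighborhood of $f_1(C)$; the content of the proof is making sense of $\beta_1^{-1}$.

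First, since $f_1$ is its own normal form at $C$, the image $f_1(C)$ is the zero-section of a rank $n-k$ vector bundle structure on (the germ of) $V_1$; in particular $f_1|_C\colon C\to V_1$ is an embedding. The identity $\beta_1\circ\phi_3|_C = f_1|_C$ then forces both $\phi_3|_C\colon C\to V_3$ and $\beta_1|_{\phi_3(C)}\colon \phi_3(C)\to V_1$ to be injective: in either direction, equality of values pulls back through $f_1|_C$ to equality of arguments in $C$. Since $\beta_1$ is an immersion between $n$-manifolds, it is also a local diffeomorphism everywhere on $V_3$.

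The heart of the argument is to produce an open neighborhood $W_3\subset V_3$ of $\phi_3(C)$ on which $\beta_1$ is globally injective. I would consider the self-intersection locus $\Theta := \{(q,q')\in V_3\times V_3 : q\neq q',\ \beta_1(q)=\beta_1(q')\}$ and argue that its closure in $V_3\times V_3$ is disjoint from $\phi_3(C)\times\phi_3(C)$. A would-be limit point $(q,q)$ on the diagonal is excluded by local injectivity of $\beta_1$ near $q$ (a local diffeomorphism is injective on a neighborhood, so the sequence of witnessing pairs must coincide eventually, a contradiction), while a limit $(q,q')$ with $q\neq q'$ both in $\phi_3(C)$ would force $\beta_1(q)=\beta_1(q')$, contradicting injectivity of $\beta_1|_{\phi_3(C)}$. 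A standard tubular-neighborhood / local-finiteness argument then produces the desired $W_3$.

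Once $W_3$ is in hand, I would set $V_1':=\beta_1(W_3)$ and $U':=\phi_3^{-1}(W_3)$. Since $\beta_1|_{W_3}\colon W_3\to V_1'$ is an injective local diffeomorphism, it is a diffeomorphism, and I define $\beta := \beta_2\circ(\beta_1|_{W_3})^{-1}\colon V_1'\to V_2$, which is an immersion as the composition of a diffeomorphism with an immersion. The required identity $\beta\circ f_1|_{U'} = \beta_2\circ(\beta_1|_{W_3})^{-1}\circ\beta_1\circ\phi_3|_{U'} = \beta_2\circ\phi_3|_{U'} = f_2|_{U'}$ is then tautological, and $f_1(U')\subset V_1'$ by construction. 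The main obstacle lies in the third paragraph, producing $W_3$ in the non-compact case, where $W_3$ must be chosen to shrink suitably toward infinity; this is a routine partition-of-unity argument but is the only place where care is needed.
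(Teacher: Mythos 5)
Your overall route coincides with the paper's: unpack the global $L$-equivalence to get $\phi_3:U_3\to V_3$ and immersions $\beta_1,\beta_2$, note that $\beta_1$ is a local diffeomorphism between $n$-manifolds which is injective on $\phi_3(C)$, find a neighborhood $W_3\supset\phi_3(C)$ on which $\beta_1$ is a diffeomorphism onto its image, and set $\beta=\beta_2\circ(\beta_1|_{W_3})^{-1}$, $U'=\phi_3^{-1}(W_3)$, $V_1'=\beta_1(W_3)$; your last paragraph is exactly the paper's conclusion.

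The gap is in your third paragraph, which is the only step with content. Your claim that $\overline{\Theta}$ is disjoint from $\phi_3(C)\times\phi_3(C)$ is correct, but when $C$ (hence $\phi_3(C)$) is non-compact --- and it usually is, being a stratum --- this disjointness does not by itself produce an open $W_3\supset\phi_3(C)$ with $(W_3\times W_3)\cap\Theta=\varnothing$. The dangerous double points are pairs $(p_j,q_j)\in\Theta$ in which, say, $q_j\in\phi_3(C)$ escapes to infinity while $p_j\to a\in\phi_3(C)$: such pairs lie in $W_3\times W_3$ for \emph{every} open $W_3\supset\phi_3(C)$ (since $q_j\in\phi_3(C)\subset W_3$ always and $p_j\in W_3$ eventually), yet they have no convergent subsequence and so contribute nothing to $\overline{\Theta}\cap(\phi_3(C)\times\phi_3(C))$. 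Indeed, ``local diffeomorphism plus injective on $A$'' does not imply injectivity on a neighborhood of a non-compact $A$; the standard counterexample is an injectively immersed curve accumulating on a piece of itself, and then no neighborhood works, however fast it shrinks at infinity. What rescues the statement here --- and what your argument never invokes --- is the normal-form hypothesis beyond embeddedness of $f_1|_C$: $f_1(C)$ is the image of a section of the bundle $V_1\to C$, hence a \emph{closed} embedded submanifold of $V_1$, so $\beta_1|_{\phi_3(C)}$ is a proper embedding onto a closed set. Properness kills exactly the escaping pairs above (from $\beta_1(q_j)\to\beta_1(a)$ one gets $q_j\to a$, contradicting local injectivity at $a$), and for pairs escaping in both coordinates one still needs a genuine gauge/shrinking construction, not merely local finiteness or a partition of unity subordinate to a cover of $\phi_3(C)$. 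To be fair, the paper's own proof dispatches this step in one sentence, so your proposal matches it in outline; but the justification you offer for the key step is not sufficient as stated, and the missing input is the closedness of $f_1(C)$ supplied by the normal form.
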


First let us prove the following assertion.

\begin{proposition}\label{pr:diffeomorphism-to-the-image-tubular-neighborhood}
Let $U,V$ be $n$-manifolds,
let $\phi:U\to V$ be an immersion
and let $C\subset U$ be a locally closed submanifold of positive codimesnion
such that $\phi|_C:U\to V$ is an imbedding as a submanifold.
Then there is a neighborhood $C\subset U'\subset U$
such that the restriction $\phi|_{U'}$ is a diffeomorphism onto the image.
\end{proposition}

\begin{proof}
Choose a tubular neighborhood $\phi(C)\subset V'\subset V$
and fix the structure of a normal bundle on $V'$.
The next reasoning is similar to the proof of Proposition~\ref{pr:normal-form}.
Namely, choose a tubular neighborhood $C\subset U'\subset U$ with a vector bundle structure
so that $\phi$ maps $U'$ to $V'$ fiberwise.
If $U'$ is sufficiently close to $C$, then the restriction of $\phi$ to each fiber is injective.
Therefore $\phi|_{U'}$ is itself injective.

Note that one can give another proof of this proposition,
similar to the proof of Lemma~\ref{l:diffeomorphism-to-the-image-locally-closed} from \S\ref{s:stratumwise-equivalence}.
\end{proof}

\begin{proof}[Proof of Proposition~\ref{pr:normal-form-universal-property}]
We will use only that $f_1|_C:C\to V_1$ is an imbedding as a submanifold,
the fiberwise structures on $U$ and $V_1$ are not necessary.

By definition, if $C$-germs of $f_1$ and $f_2$ are globally $L$-equivalent,
there exist
a neighborhood $C\subset U''\subset U$,
an $n$-manifold $V_3$,
a map $f_3:U''\to V_3$
and immersions $\beta_1:V_3\to V_1$ and $\beta_2:V_3\to V_2$
such that $\beta_1\circ f_3=f_1|_{U''}$ and $\beta_2\circ f_3=f_2|_{U''}$.

It is easy to see that $f_3(C)\subset V_3$ is a submanifold
and the immersion $\beta_1|_{f_3(C)}$ is a diffeomorphism onto the image.
Therefore for some neighborhood $f_3(C)\subset V_3'\subset V_3$
the restriction $\beta_1|_{V_3'}$ is a diffeomorphism onto the image.
Then it remains to set $U'=f_3^{-1}(V_3')$, $V_1'=\beta_1(V_3')$
and $\beta=\beta_2\circ(\beta_1|_{V_3'})^{-1}$.
\end{proof}

\subsection{Cylindric covers}

Let $C$ be a manifold and $p:U\to C$ and $q:V\to C$ be vector bundles.
Let $\phi:U\to V$ be a fiberwise map such that $p=q\circ\phi$.
Take any subset $Z\subset C$.

\begin{definition}\label{def:cylindric-covers}
Suppose we are given a cover $Z\subset\bigcup U_i\subset U$.
Denote the union $\bigcup U_i$ by $B$.
The cover is called {\it cylindric},
if $U_i=B\cap p^{-1}(p(U_i))$ for every $i$.

Given cylindric covers $Z\subset\bigcup U_i\subset U$ and $\phi(Z)\subset\bigcup V_i\subset V$,
the map $\phi$ is called {\it vertical} (with respect to the covers),
if $U_i=B\cap\phi^{-1}(V_i)$ for every $i$.
\end{definition}

\begin{remark}\label{r:cylindric-intersection}
Note that if $\phi$ is vertical, then $\phi(U_i)\subset V_i$ for every $i$.
Clearly, for any $i,j$ we have $f(U_i)\cap f(U_j)=f(U_i\cap U_j)$.
Moreover,
$$\phi(U_i)\cap V_j=\phi(U_i\cap U_j)$$
for any $i,j$.
This is the key property of cylindric covers and vertical maps.
\end{remark}

\begin{proposition}\label{pr:cylindric-refinement}
Let $C$ be a manifold and $p:U\to C$ and $q:V\to C$ be vector bundles.
Let $\phi:U\to V$ be a fiberwise map such that $p=q\circ\phi$.
We consider $C$ as the zero section of $U$.
Take a closed subset $Z\subset C$.

Suppose we are given locally finite covers
$Z\subset\bigcup U_i$ and $\phi(Z)\subset\bigcup V_i$
such that $\phi(U_i)\subset V_i$ for all $i$.
Then the covers admit cylindric refinements $\bigcup U'_i$ and $\bigcup V'_i$
such that the map $\phi$ is vertical.
Moreover, the refinements can be chosen so that
$U'_i\subset U_i$ and $V'_i\subset V_i$ for all $i$.
\end{proposition}

\begin{proof}
Choose norms on the fibers of $U$ and $V$.
We denote these norms similarly by~$\|\cdot\|$.
Given a fiber $F$ and a point $x\in F$,
we denote the open $\eps$-ball about $x$ in $F$ by $B_\eps(x)$.

Choose a cover $Z\subset\bigcup W_i\subset C$ such that $\overline{W_i}\subset U_i$ for all $i$.
Recall that $\phi(C)\subset V$ is the zero section and $\phi|_C$ is a homeomorphism.
Therefore $\overline{\phi(W_i)}=\phi(\overline{W_i})\subset V_i$ for all $i$.

\vspace{.5em}

To define $V'_i$, for each $i$ we take a function $\xi_i:\phi(C)\to \R_{\ge0}$ such that
$B_{\xi_i(x)}(x)\subset V_i$ for all $x\in\phi(C)$ and $\xi_i(x)\ne0$ for all $x\in V_i\cap \phi(C)$.
Then for each $i$ take any function $\eta_i:\phi(C)\to \R_{>0}$ such that
$\eta_i\equiv\xi_i$ on $\phi(\overline{W_i})$ and $\eta_i\equiv5$ on $\phi(C)\setminus V_i$.

Define a function $\eta:\phi(C)\to \R_{>0}$ as $\eta(x)=\min \eta_i(x)$.
Note that, since the cover $\bigcup V_i$ is locally finite, $\eta$ is well defined and continuous.
Then the set $B=\{y\in V\ |\ \|y\|<\eta(\phi(q(y)))\}$ is an open neighborhood of $\phi(C)$.

Finally, we take $V'_i=q^{-1}(W_i)\cap B$ for each $i$.
By the construction, $V'_i\subset V_i$ and $Z\subset\bigcup V'_i$.

\vspace{.5em}

To define $U'_i$ we replace $U_i$ by $U_i\cap\phi^{-1}(B)$
and apply to the obtained cover the procedure similar to the described above.
\end{proof}

\subsection{Building a global $L$-equivalence}\label{s:if-locally-then-stratumwise}

%Now we are ready to prove Proposition~\ref{pr:if-locally-then-stratumwise}.
%We will deduce it by the following assertion, that also will be used in the proof of Theorem~\ref{th:h-principle}.

The following assertion will be very important in our proof of Theorem~\ref{th:h-principle}.
%In the proof of this assertion only properties of a stratified immersion used.
%We state it for stratified immersion, which form slightly larger class of maps then generic maps.

\begin{theorem}\label{th:if-locally-then-at-stratum}
Suppose we are given $n$-manifolds $U,V,V'$ and stratified immersions $\phi:U\to V$ and $\phi':U\to V'$.
Suppose for a locally closed submanifold $C\subset U$ the restrictions $\phi|_C$ and $\phi'|_C$ are immersions and $\phi$ is $C$-normal.
Suppose the $C$-germs of $\phi$ and $\phi'$ are locally $L$-equivalent.
Then there is an immersion $\beta:\Op(\phi(C))\to V'$ such that $\beta\circ\phi|_{\Op(C)}=\phi'|_{\Op(C)}$.
\end{theorem}

\begin{proof}
Let $C=\bigcup_{i,k}\Delta_i^k$ be a sufficiently small triangulation.
Here $\Delta_i^k$ denotes a $k$-simplex without boundary, $k=0,1,\ldots,m$, where $m=\dim(C)$.
Since the $C$-germs of $\phi$ and $\phi'$ are locally $L$-equivalent, we may assume that
for every $\Delta_i^k$ there are
neighborhoods $\Delta_i^k\subset U_i^k\subset U$ and $\phi(U_i^k)\subset V_i^k\subset V$
and an immersion $\beta_i^k:V_i^k\to V'$
such that $\beta_i^k\circ\phi|_{U_i^k}=\phi'|_{U_i^k}$.
The neighborhoods can be taken so that
the cover $\phi(C)\subset\bigcup V_i^k$ is locally finite
and $V_i^k\cap V_j^k=\es$ for $i\ne j$ and all $k$.

We will prove the lemma by induction.
On the $k$-th step we will obtain an immersion $\beta^k:\Op(\phi(\sk^k(C)))\to V'$
such that $\beta^k\circ\phi|_{\Op(\sk^k(C))}=\phi'|_{\Op(\sk^k(C))}$.
Let us describe the $k$-th step, $k=0,1,\ldots,m$.

By the induction hypothesis, there are
neighborhoods $\sk^{k-1}(C)\subset U^{k-1}\subset U$ and $\phi(U^{k-1})\subset V^{k-1}\subset V$
and an immersion $\beta^{k-1}:V^{k-1}\to V'$
such that $\beta^{k-1}\circ\phi|_{U^{k-1}}=\phi'|_{U^{k-1}}$.
(If $k=0$, we set $U^{k-1}$ and $V^{k-1}$ to be empty.)
%Next we denote $U^{k-1}\cup\bigcup U_i^k$ by $R$.

We may assume that the covers
$\sk^k(C)\subset \big(U^{k-1}\cup\bigcup U_i^k\big)\subset U$ and
$\phi(\sk^k(C))\subset \big(V^{k-1}\cup\bigcup V_i^k\big)\subset V$
are cylindric and the map $\phi$ is vertical.
Otherwise we replace the sets $U^{k-1}$, $V^{k-1}$, $U_i^k$ and $V_i^k$
by smaller ones applying Proposition~\ref{pr:cylindric-refinement}.
Recall that
$$
\beta_i^{k}\circ\phi|_{U^{k-1}\cap U_i^k}=
\phi'|_{U^{k-1}\cap U_i^k}=
\beta^{k-1}\circ\phi|_{U^{k-1}\cap U_i^k}.
$$
Therefore, by Remark~\ref{r:cylindric-intersection},
for every $j$
the maps $\beta^{k-1}$ and $\beta_j^k$ coincides on
the intersection
$V^{k-1}\cap V_j^k\cap\phi\big(U^{k-1}\cup\bigcup U_i^k\big)$.
%$V^{k-1}\cap V_i^k\cap\phi(\Op(\sk^k(C)))$.

Take a neighborhood $\phi(\sk^{k-1}(C))\subset W\subset V$
such that $\overline{W}\subset V^{k-1}$.
For every $V_i^k$ we define a function $t:V_i^k\to[0;1]$ such that
$t=0$ on $V_i^k\setminus V^{k-1}$ and $t=1$ on $V_i^k\cap\overline{W}$.
Then we define a map $\tilde\beta_i^k:V_i^k\to V'$ as
$$\tilde\beta_i^k(x)=t\cdot\beta^{k-1}(x)+(1-t)\cdot\beta_i^k(x).$$
Here we use an Euclidean structure which comes from arbitrary local coordinates in $V$ near $\phi'(\Delta_i^k)$.
Since the triangulation of $C$ is sufficiently small, we may assume that such coordinates exist.
Note that the map $\tilde\beta_i^k$ depends on the choice of this coordinates, but we can take any of them.

By the construction, $\tilde\beta_i^k\equiv\beta_i^k$ on $\phi(U_i^k)$.
So Proposition~\ref{pr:image-of-generic} implies that $d\tilde\beta_i^k|_{\phi(\Delta_i^k)}$ has rank $n$.
Then, by continuity of the differential,
%and by the inverse function theorem,
$\tilde\beta_i^k|_{\Op(\phi(\Delta_i^k))}$ is an immersion.
This immersion coincides with $\beta^{k-1}$ on $W$.
Together the maps $\beta^{k-1}$ and $\beta_i^k$ define a desired immersion $\Op(\phi(\sk^k(C)))\to V'$,
so the proof of $k$-th step is complete.
\end{proof}

\section{Global approach to generic germs}\label{s:global}

In this section we will recall the main definitions and then prove Theorem~\ref{th:h-principle}.

\subsection{Locally compatible maps}\label{s:wccg}

First let us recall the following definition.

\begin{definition}\label{def:cwcm}
Suppose we are given a collection of maps $\{\phi_i:U_i\to V_i\}$,
where $U_i\subset M$ are open subsets and $V_i$ are $n$-manifolds.
We say that $\{\phi_i\}$ is a {\it collection of locally compatible maps},
if for every $i,j$ the germs of $\phi_i$ and $\phi_j$ at $U_i\cap U_j$ are locally $L$-equivalent.

Two collections of locally compatible maps $\{\phi_i\}$ and $\{\phi'_i\}$
are called {\it locally $L$-equivalent}, if their union $\{\phi_i\}\cup\{\phi'_i\}$
is a collection of locally compatible maps.
A map $f:M\to N$ is {\it locally $L$-equivalent}
to the collection of of locally compatible maps $\{\phi_i\}$,
if
%the collections of of locally compatible maps
the collection $\{\phi_i\}$ and the collection consisting of a single map $\{f\}$ are locally $L$-equivalent.
\end{definition}

One may think of a collection of locally compatible maps as a global map on $\bigcup U_i$ with variable target.

\begin{proposition}\label{pr:cwcg-stratification}
Let $S\subset M$ be a closed subset and
let $\{\phi_i:U_i\to V_i\}$ be a collection of locally compatible generic maps
such that $\bigcup\Sigma(\phi_i)=S$.
Then there is a stratification of $S$ such that
for every stratum $C\subset S$ and for every $i$
the restriction $\phi_i|_{C\cap U_i}$ is an immersion.
%Denote the union of $k$-dimensional strata of $S$ by $S_k$.
%Then $S=S_0\sqcup S_1\sqcup\ldots\sqcup S_{n-1}$.
\end{proposition}

\begin{proof}
Using Theorem~\ref{th:sigma-stratification} we choose a stratification of $\Sigma(\phi_i)=S\cap U_i$ for every $i$.
The obtained stratifications are agreed on the intersections $U_i\cap U_j$ by the naturalness and the $\Diff$-invariance.

Recall that every stratum $C\subset S$ lies in some $\Sigma^I(\phi_i)$,
where $I$ is a sequence of length $n+1$.
Since $j^{n+1}(\phi_i)$ is transversal to $\Sigma^I(U_i,V_i)$,
by dimension reason the sequence $I$ has zero at the end.
This means that the restriction $\phi_i$ to $\Sigma^I(\phi_i)$
(and, therefore, to $C\cap U_i$) is an immersion.

Note that the obtained stratification of $S$ is a canonical refinement of the Thom-Boardman decomposition,
since our operations in the proof of Proposition~\ref{pr:diff-invariant-stratification-closed} did not depend on the choice of local coordinates etc.
\end{proof}

\begin{lemma}\label{l:cwcg-stratification}
For $S$ and $\{\phi_i\}$ as above,
there is a triangulation of $S$
such that every its open simplex $\Delta$ is a smooth submanifold in $M$
and for every $i$ the restriction $\phi_i|_{\Delta\cap U_i}$ is an immersion.
\end{lemma}

\begin{proof}
First make a stratification of $S$ using Proposition~\ref{pr:cwcg-stratification},
and then apply the result of \cite{goresky}.
Note that, unlike the stratification of $S$,
our triangulation is not at all canonical.
\end{proof}

\begin{definition}
Suppose we are given two collections of locally compatible maps
$\{\phi'_i:U'_i\to V'_i\}$ and $\{\phi_i:U_i\to V_i\}$.
Then $\{\phi'_i\}$ is called {\it a refinement} of $\{\phi_i\}$,
if they are locally $L$-equivalent and every $U'_i$ lies in some $U_j$.
\end{definition}

\subsection{Twisted tangent bundle}\label{s:ttb}

Here we recall the definition from \S\ref{s:intro-ttb} and prove that it is correct.
Let us begin with the following important construction.

\begin{proposition}\label{pr:gluing-wcwm-bundle}
Let $\{\phi_i:U_i\to V_i\}$ be a collection of locally compatible generic maps.
% such that $\bigcup\Sigma(\phi_i)$ is nowhere dense.
Denote the vector bundles $\phi_i^*(TV_i)$ by $E_i$.
Then there are fiberwise isomorphisms $\Phi_{i,j}:E_i|_{U_i\cap U_j}\to E_j|_{U_i\cap U_j}$ such that
$\Phi_{i,i}=\Id_{E_i}$ and $\Phi_{k,i}\circ \Phi_{j,k}\circ \Phi_{i,j}|_{U_i\cap U_j\cap U_k}=\Id_{E_i}|_{U_i\cap U_j\cap U_k}$ for every $i,j,k$.
\end{proposition}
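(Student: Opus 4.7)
The plan is to build each $\Phi_{i,j}$ pointwise from the germ-level diffeomorphisms furnished by local $L$-equivalence, and then to verify well-definedness, smoothness, and the cocycle condition by repeated application of Proposition~\ref{pr:image-of-generic}.

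First, fix $i,j$ and a point $x \in U_i \cap U_j$. By weak compatibility, the restrictions $\phi_i|_{U_i \cap U_j}$ and $\phi_j|_{U_i \cap U_j}$ are locally $L$-equivalent, so there exist a neighborhood $x \in W_x \subset U_i \cap U_j$ and a diffeomorphism $\beta^{ij}_x$ from a neighborhood of $\phi_i(x)$ in $V_i$ onto a neighborhood of $\phi_j(x)$ in $V_j$ such that $\beta^{ij}_x \circ \phi_i|_{W_x} = \phi_j|_{W_x}$. Taking differentials at $\phi_i(x)$ gives a linear isomorphism $T_{\phi_i(x)} V_i \to T_{\phi_j(x)} V_j$, i.e.\ an isomorphism of the fibers $E_i|_x \to E_j|_x$. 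I would define $\Phi_{i,j}(x)$ to be this isomorphism.

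The first thing to check is that $\Phi_{i,j}(x)$ does not depend on the choice of $\beta^{ij}_x$. If $\beta$ and $\beta'$ are two such diffeomorphisms, then $\beta \circ \phi_i = \beta' \circ \phi_i = \phi_j$ on a common neighborhood $W$ of $x$. Since $\phi_i$ is generic, Proposition~\ref{pr:image-of-generic} applies to the restriction $\phi_i|_W$ and yields $d\beta|_y = d\beta'|_y$ for every $y \in \phi_i(W)$, in particular at $y = \phi_i(x)$. Hence $\Phi_{i,j}(x)$ is well-defined. Smoothness in $x$ is then automatic: on a neighborhood of any given point we may take a single smooth $\beta^{ij}_x$ and the induced bundle map $\phi_i^*(d\beta^{ij}_x)\colon E_i \to E_j$ is smooth, and by the just-established uniqueness these locally smooth descriptions agree with the global pointwise $\Phi_{i,j}$.

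It remains to verify the two identities. For $\Phi_{i,i} = \Id_{E_i}$, take $\beta^{ii}_x = \Id$ on a neighborhood of $\phi_i(x)$; then $d\Id = \Id$ gives the identity on fibers. For the cocycle relation at $x \in U_i \cap U_j \cap U_k$, pick diffeomorphisms $\beta^{ij}_x, \beta^{jk}_x, \beta^{ki}_x$ as above. On a sufficiently small neighborhood of $x$ in $M$ the composition $\gamma := \beta^{ki}_x \circ \beta^{jk}_x \circ \beta^{ij}_x$ satisfies $\gamma \circ \phi_i = \phi_i$. Applying Proposition~\ref{pr:image-of-generic} to $\phi_i$ with $\alpha = \gamma$ and $\beta = \Id$ yields $d\gamma|_{\phi_i(x)} = \Id$, which by the chain rule translates to $\Phi_{k,i}(x) \circ \Phi_{j,k}(x) \circ \Phi_{i,j}(x) = \Id_{E_i|_x}$.

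The only delicate step is the well-definedness and the cocycle identity, both of which rest entirely on Proposition~\ref{pr:image-of-generic}; without the genericity of the $\phi_i$ one could not control the derivatives of the $\beta$'s from the equality of their compositions with $\phi_i$. Everything else (existence of the $\beta^{ij}_x$, smoothness, and the trivial identity $\Phi_{i,i} = \Id$) is routine.
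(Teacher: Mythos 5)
Your proposal is correct and follows essentially the same route as the paper: define $\Phi_{i,j}$ fiberwise as the differential of a local diffeomorphism provided by local $L$-equivalence, and use Proposition~\ref{pr:image-of-generic} (genericity of the $\phi_i$) to get independence of choices, hence well-definedness, smoothness by local agreement, and the cocycle identity. Your explicit derivation of the cocycle relation via $\gamma=\beta^{ki}_x\circ\beta^{jk}_x\circ\beta^{ij}_x$ and $d\gamma=\Id$ is just a spelled-out version of what the paper says ``follows directly from the definition.''
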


In fact, the constructed below fiberwise isomorphisms $\Phi_{i,j}$ are natural in the following sense:
each $\Phi_{i,j}$ is defined by the maps $\phi_i$ and $\phi_j$ and does not depend on $\phi_k$, $k\ne i,j$.

\begin{proof}
For every point $x\in U_i\cap U_j$ choose a neighborhood $x\in U_x\subset U_i\cap U_j$
and neighborhoods $\phi_i(U_x)\subset V_{i,x}\subset V_i$ and $\phi_j(U_x)\subset V_{j,x}\subset V_j$
with a diffeomorphism $\beta_x:V_{i,x}\to V_{j,x}$ such that $\beta_x\circ \phi_i|_{U_x}=\phi_j|_{U_x}$.
We set $\Phi_{i,j}$ to be equal $d\beta_x$ on the fiber of $E_i$ at $x$.
By Proposition~\ref{pr:image-of-generic} this does not depend on the choice of the diffeomorphism $\beta_x$.

By the same reasoning Proposition~\ref{pr:image-of-generic} implies that
$\Phi_{i,j}$ is well defined over the whole of $U_i\cap U_j$.
Note that $\Phi_{i,j}$ is continuous and smooth over a neighborhood of any point $x\in U_i\cap U_j$,
so it is continuous and smooth over the whole of $U_i\cap U_j$.
The equality $\Phi_{k,i}\circ \Phi_{j,k}\circ \Phi_{i,j}|_{U_i\cap U_j\cap U_k}=\Id_{E_i}|_{U_i\cap U_j\cap U_k}$
follows directly from Proposition~\ref{pr:image-of-generic} as well.
\end{proof}

Suppose we are given a closed subset $S\subset M$ %of positive codimension
and a collection of locally compatible generic maps $\{\phi_i:U_i\to V_i\},\ i=1,2,\ldots$
such that $\bigcup \Sigma(\phi_i)\subset S$.

\begin{definition}\label{def:twisted-tangent-bundle-cwcm}
The {\it twisted tangent bundle $T^{\{\phi_i\}}M$} is the following rank $n$ vector bundle over~$M$.
%Choose a closed neighborhood $S\subset U'\subset \bigcup U_i$,
Let $U_{0}=V_{0}=M\setminus S$ and set $\phi_{0}:U_0\to V_0$ to be equal $\Id_{M\setminus S}$.
Note that $\{\phi_i\}$ together with $\phi_{0}$ is a collection of locally compatible generic maps.
Applying Proposition~\ref{pr:gluing-wcwm-bundle} for it,
we obtain the vector bundles $E_i$ over $U_i$ and the fiberwise isomorphisms $\Phi_{i,j}$ over $U_i\cap U_j$,
$i=0,1,\ldots$.
Then the total space of $T^{\{\phi_i\}}M$ is obtained by gluing together $\{E_i\}$ using $\{\Phi_{i,j}\}$.
\end{definition}

%The following proposition gives us the implication ``only if'' of Theorem~\ref{th:h-principle}:

\begin{lemma}\label{l:ttb-well-defined}
We are given two collections of locally compatible generic maps
$\{\phi_i:U_i\to V_i\}$ and $\{\phi'_i:U'_i\to V'_i\}$
such that $\bigcup \Sigma(\phi_i)\subset S$ and $\bigcup \Sigma(\phi'_i)\subset S$.
Suppose $\{\phi_i\}$ and $\{\phi'_i\}$ are locally $L$-equivalent.
Then the twisted tangent bundles $T^{\{\phi_i\}} M$ and $T^{\{\phi'_i\}}M$ are isomorphic.
\end{lemma}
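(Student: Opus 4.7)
The plan is to build a single ``enveloping'' bundle $T\to M$ from the union $\{\phi_i\}\cup\{\phi'_{i'}\}$ and recognize both $T^{\{\phi_i\}}M$ and $T^{\{\phi'_{i'}\}}M$ as canonically isomorphic to it.

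First, by Definition~\ref{def:cwcm} the local $L$-equivalence of the two collections says exactly that the union $\{\phi_i\}\cup\{\phi'_{i'}\}$ is itself a collection of weakly compatible generic germs at $S$. Adjoining $\phi_0=\Id_{M\setminus S}$ and applying Proposition~\ref{pr:gluing-wcwm-bundle} to this combined collection yields local bundles $E_i=\phi_i^*(TV_i)$, $E'_{i'}=(\phi'_{i'})^*(TV'_{i'})$, $E_0=TM|_{M\setminus S}$ together with a cocycle of fiberwise isomorphisms among all of them, and gluing produces a rank~$n$ bundle $T\to M$.

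The crucial observation is that the isomorphisms $\Phi_{i,j}$ furnished by Proposition~\ref{pr:gluing-wcwm-bundle} are defined pointwise as $d\beta_x$ for any $\beta_x$ realizing the local $L$-equivalence of the pair $(\phi_i,\phi_j)$ near $x$, and by Proposition~\ref{pr:image-of-generic} this derivative is independent of the choice of $\beta_x$. In particular, the subcocycle of the combined cocycle supported on the indices $\{0\}\cup\{i\}$ is exactly the cocycle that Definition~\ref{def:twisted-tangent-bundle-cwcm} uses to build $T^{\{\phi_i\}}M$. So $T^{\{\phi_i\}}M$ and $T$ arise from two \v{C}ech-style gluings on nested open covers of $M$ (namely $\{M\setminus S\}\cup\{U_i\}$ and $\{M\setminus S\}\cup\{U_i\}\cup\{U'_{i'}\}$) whose restrictions to the smaller cover coincide, and the standard cover-refinement fact yields a canonical bundle isomorphism $T^{\{\phi_i\}}M\cong T$. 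The identical argument applied to the primed subcollection gives $T^{\{\phi'_{i'}\}}M\cong T$, and composing the two isomorphisms proves the lemma.

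The only step requiring some care --- and which I expect to be the main, essentially routine, obstacle --- is this cover-refinement step: one must write the bundle isomorphism $T^{\{\phi_i\}}M\to T$ down explicitly and check smoothness. For each $x\in U'_{i'}$ one picks some $j\in\{0\}\cup\{i\}$ with $x\in U_j$ (possible because $\{M\setminus S\}\cup\{U_i\}$ already covers $M$); then $\Phi_{j,i'}|_x$ identifies the fiber of $E'_{i'}$ at $x$ with the fiber of $E_j$ there, so the extra pieces $E'_{i'}$ contribute no new fibers to $T$ but only further identifications of fibers already present in $T^{\{\phi_i\}}M$. The cocycle identity $\Phi_{k,i'}\circ\Phi_{j,k}\circ\Phi_{i',j}=\Id$ supplied by Proposition~\ref{pr:gluing-wcwm-bundle} ensures that this identification is independent of the choice of $j$, so the natural inclusion of gluing data descends to a well-defined fiberwise bijection which is smooth on each local chart and therefore globally smooth.
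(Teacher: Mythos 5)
Your proposal is correct and follows essentially the same route as the paper: the paper likewise observes that local $L$-equivalence means the union $\{\phi_i\}\cup\{\phi'_{i'}\}$ is weakly compatible and then composes the two tautological ``forgetful'' isomorphisms $T^{\{\phi_i\}\cup\{\phi'_{i'}\}}M\simeq T^{\{\phi_i\}}M$ and $T^{\{\phi_i\}\cup\{\phi'_{i'}\}}M\simeq T^{\{\phi'_{i'}\}}M$. Your cover-refinement discussion is just an explicit spelling-out of these forgetful isomorphisms, with well-definedness guaranteed by Proposition~\ref{pr:image-of-generic} and the cocycle identity of Proposition~\ref{pr:gluing-wcwm-bundle}, exactly as the paper intends.
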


\begin{proof}
By definition, the union $\{\phi_i\}\cup\{\phi'_i\}$ is locally compatible.
To prove the proposition we just compose the tautological ``forgetful'' isomorphisms
$T^{\{\phi_i\}\cup\{\phi'_i\}} M\simeq T^{\{\phi_i\}} M$
and $T^{\{\phi_i\}\cup\{\phi'_i\}} M\simeq T^{\{\phi'_i\}} M$.
\end{proof}

\subsection{Proof of Theorem~\ref{th:h-principle}}\label{s:h-principle-local}

Recall that we are given $n$-manifolds $M,N$, $n>1$, a continuous map $f:M\to N$ and a~nonempty closed subset $S\subset M$.
Also we are given a collection of locally compatible generic maps $\{\phi_i:U_i\to V_i\}$
such that $\bigcup\Sigma(\phi_i)=S$.
We have to prove that $f$ is homotopic to a generic map $f'$ such that
$f'$ and $\{\phi_i\}$ are locally $L$-equivalent and $\Sigma(f')=S$
if~and only if the vector bundles $T^{\{\phi_i\}}M$ and $f^*(TN)$ are isomorphic.

\begin{proof}
%[Proof of Theorem~\ref{th:h-principle-local}]
Suppose $f$ is homotopic to a generic map $f'$
which is locally $L$-equivalent to $\{\phi_i\}$
and which has no critical points outside~$S$.
Then by Lemma~\ref{l:ttb-well-defined} we have $T^{\{\phi_i\}} M\simeq f'^*(TN)\simeq f^*(TN)$.

It remains to prove the ``if'' part of the theorem.

\vspace{.5em}

Suppose the bundles $T^{\{\phi_i\}}M$ and $f^*(TN)$ are isomorphic.
By Lemma~\ref{l:cwcg-stratification}
the subset $S\subset M$ is triangulable.
Let $S=\bigcup_{i,k}\Delta_i^k$ be a sufficiently small triangulation.
Here $\Delta_i^k$ denotes a $k$-simplex without boundary, $k=0,1,\ldots,n-1$,
and $i$ takes value in some (finite or countable) set of indices.
Every $\Delta_i^k\subset M$ is a smooth submanifold.
%We require that the closure of every stratum of $S$ is a subcomplex for this triangulation.

Since the triangulation is sufficiently small, we may assume that
there is a cover of $S$ by tubular neighborhoods of the simplices,
which is a
%!!!refinement!!! NE SOVSEM
refinement
of $\{U_i\}$.
Therefore we can replace the collection of locally compatible maps $\{\phi_i\}$ by a refinement $\{\phi_i^k:U_i^k\to V_i^k\}$,
where every $U_i^k\supset\Delta_i^k$ is a tubular neighborhood.
By Lemma~\ref{l:generic-normal-form} we may assume that every map $\phi_i^k$ is $\Delta_i^k$-normal.
We take $U_i^k$ so close to $\Delta_i^k$ that
$U_i^k\cap U_j^l\ne\es$ if and only if
$\Delta_i^k\subset\partial\Delta_j^l$ or $\Delta_j^l\subset\partial\Delta_i^k$
for any
%$k,l=0,\ldots,n-1$ and any $i,j$.
pair of different simplices.

Denote the vector bundle $T^{\{\phi_i^k\}}M$ by $E$.
Note that $E\simeq T^{\{\phi_i\}}M$ by Lemma~\ref{l:ttb-well-defined}.
Let $F:E\to TN$ be a fiberwise isomorphism covering $f$,
which exists by the hypothesis of the ``if'' part of the theorem.
Let $\Phi_i^k:E|_{U_i^k}\to TV_i^k$ be the tautological fiberwise isomorphism covering $\phi_i^k$, defined as in \S\ref{s:ttb}.

\vspace{.5em}

Next we will deform $f$ and $F$ in $n$ steps, from $0$ to $n-1$.
After the $l$-th step will we will get a collections of immersions
$\beta_i^k:V_i^k\to N$ for all $k\le l$ and all $i$.
The immersions $\beta_i^k$ must be compatible with deformed $f$ and $F$.
Namely,
$$f|_{\Op(\Delta_i^k)}=\beta_i^k\circ\phi_i^k|_{\Op(\Delta_i^k)}
\ \ \ \text{and} \ \ \
F|_{\Op(\Delta_i^l)}=d\beta_i^k\circ\Phi_i^k|_{\Op(\Delta_i^k)}
\eqno (*)$$

% where the $k$-th step corresponds to~$S_k$.
% to make its $S$-germ locally $L$-equivalent to~$\{\phi_i^j\}$.

\vspace{.5em}

{\it Step 0.}
%We homotopy $f$ so that for every $i$
%the map $f|_{U_i^0}$ becomes smooth and globally $L$-equivalent to $\phi_i^0$ at $U_i^0$.
%We will use that every $U_i^0$, respectively $V_i^0$, deformation retracts
%onto the point $\Delta_i^0$, respectively $\phi_i^0(\Delta_i^0)$.
%This holds since the map $\phi_i^0$ is its own normal form at $\Delta_i^0$.
%
For every $i$ we fix any immersion $\beta_i^0:V_i^0\to N$ such that
$(\beta_i^0\circ\phi_i^0)|_{\Delta_i^0}=f|_{\Delta_i^0}$ and
$d\beta_i^0\circ\Phi_i^0|_{\Delta_i^0}=F|_{\Delta_i^0}$.
Take a closed tubular neighborhood $\Delta_i^0\subset\hat U_i^0\subset U_i^0$.
Since $\hat U_i^0$ deformation retracts onto $\Delta_i^0$,
we can homotope $f|_{\hat U_i^0}$ to $\beta_i^0\circ\phi_i^0|_{\hat U_i^0}$.
Then we extend the homotopy to the whole of $M$.
Outside $\hat U_i^0$ we allow $f$ to be not smooth.

During the homotopy of $f$
we make a covering deformation of $F$ in the class of fiberwise isomorphisms
to obtain $F|_{\hat U_i^0}=d\beta_i^0\circ \Phi_i^0|_{\hat U_i^0}$.
We can do this since $\hat U_i^0$ deformation retracts onto $\Delta_i^0$.
Make such deformations for all $i$ independently.
(To abbreviate notation, %at the end of every step
we always redenote the obtained maps by $f$ and $F$, respectively.)

\vspace{.5em}

{\it Step $k$.}
Take any $k$-simplex $\Delta_i^k$.
Denote $\Delta_i^k\cap\Op(\sk^{k-1}(S))$ by $C$.
As a result of the previous steps we have that
the $C$-germs of $f$ and $\phi_i^k$ are locally $L$-equivalent.
Then by Theorem~\ref{th:if-locally-then-at-stratum}
there is an immersion $\beta:\Op(\phi_i^k(C))\to N$
such that $\beta\circ\phi_i^k|_{\Op(C)}=f|_{\Op(C)}$.

Take a tubular neighborhood $\phi_i^k(\Delta_i^k\cap\Op(\sk^{k-1}(S)))\subset V'\subset V_i^k$
such that $\beta$ is defined on
%$\overline{V'}\setminus\big(\phi_i^k(\Delta_i^k)\setminus C\big)$.
$\overline{V'}$.
Denote $\phi_i^k(\Delta_i^k)\cup\overline{V'}$ by $K$.
Then we can define a map $\beta':K\to N$
as $f\circ(\phi_i^k)^{-1}$ on $\phi_i^k(\Delta_i^k)$ and as $\beta$ on $\overline{V'}$.
Note that $\beta'$ is continuous.
Since $V_i^k$ deformation retracts onto $K$, the map $\beta'$ can be extended from $K$ to the whole of $V_i^k$.
Then applying the M.\,Gromov's $h$-principle (Theorem~\ref{th:gromov})
to $\beta'$ and $F\circ(\Phi_i^k)^{-1}$
we homotope $\beta'$ to an immersion $\beta_i^k:V_i^k\to N$ relatively $\overline{V'}$.

Choose sufficiently small tubular neighborhoods
$\Delta_i^k\subset U\subset M$ and $\sk^{k-1}(S)\subset U'\subset M$.
Denote $\overline{U}\setminus\partial\Delta_i^k$ by $\hat U$.
We may assume that $\phi_i^k$ is defined on $\hat U$
and that $\phi_i^k(\hat U\cap\overline{U'})\subset \overline{V'}$,
see Fig.~\ref{fig:neighborhoods-in-uk}.
Let $H=\Delta_i^k\cup\big(\hat U\cap\overline{U'})$.
Also we may assume that $\hat U$ deformation retracts onto $H$.

\begin{figure}[h]
\center{\includegraphics{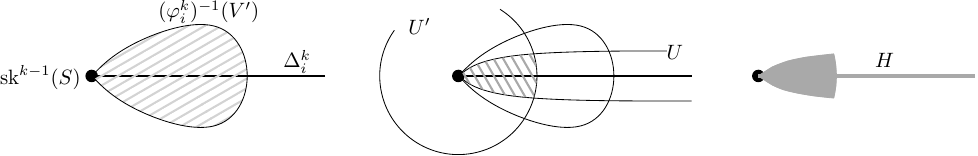}}
\caption{Choosing neighborhoods $U\supset\Delta_i^k$ and $U'\supset\sk^{k-1}(S)$.}\label{fig:neighborhoods-in-uk}
\end{figure}

Then we can homotope $f|_{\hat U}$ to $\beta_i^k\circ\phi_i^k|_{\hat U}$
relatively $\hat U\cap\overline{U'}$.
Extend this homotopy to the whole of $M$ relatively $\overline{U'}$.
At the same time we deform $F$ in the class of fiberwise isomorphisms fixed on $\overline{U'}$
so that as a result $F|_{\hat U}=d\beta_i^k\circ\Phi_i^k|_{\hat U}$.

We can apply the described deformations of $f$ and $F$
to all the $k$-simplices independently as they have disjoint neighborhoods.

\vspace{.5em}

{\it The final step.}
At the beginning of this step the map $f$ is smooth on $\Op(S)$ and
for all $k=0,\ldots,n-1$ and all $i$
there are immersions $\beta_i^k:V_i^k\to N$ such that
the equalities $(*)$ hold.
%In particular, germ at $S$ is $L$-equivalent to $\{\phi_i^j\}$.

Define a fiberwise map $F':TM\to TN$ covering $f$ as
$$
F'(x)=
\left\{
\begin{array}{ll}
F(x) & \text{ if $x\in M\setminus S$,}\\
df(x) & \text{ if $x\in\Op(\Delta_i^k)$ for any $i,j$}.
\end{array}
\right.
%\eqno{(**)}
$$
Here we identify $TM|_{M\setminus S}$ with $E|_{M\setminus S}$ as in Definition~\ref{def:twisted-tangent-bundle-cwcm}.
The map $F'$ is well defined and continuous in view of $(*)$ and Proposition~\ref{pr:gluing-wcwm-bundle}.

\begin{comment}
Namely, recall that the vector bundle $T^{\{\phi_i\}}M$ is obtained
by gluing $TM|_{M\setminus S}$ and $\{\phi_i^*(TV_i)\}$.
Over $U_i\setminus S$ we have glued $TM|_{M\setminus S}$ with $\phi_i^*(TV_i)$ using
the fiberwise isomorphism $\Phi_i^{-1}\circ d\phi_i$.
So if we identify $T^{\{\phi_i\}}M|_{U_i\setminus S}$ with $TM|_{U_i\setminus S}$,
then we have $F|_{U_i\setminus S}=d\beta_i\circ d\phi_i|_{U_i\setminus S}$,
which implies the correctness of $(**)$ for $x\in(M\setminus S)\cap U_i$.

Also recall that in the definition of $T^{\{\phi_i\}}M$
we have glued $\phi_i^*(TV_i)$ with $\phi_j^*(TV_j)$ over $U_i\cap U_j$ using
the fiberwise isomorphism $\Phi_j^{-1}\circ\alpha_{i,j}\circ\Phi_i$.
To check the correctness of $(**)$ for $x\in U_i\cap U_j$ note that
$$d\beta_i\circ\Phi_i=(d\beta_j\circ\Phi_j)\circ(\Phi_j^{-1}\circ\alpha_{i,j}\circ\Phi_i).$$
\end{comment}

Take sufficiently small closed neighborhood $\sk^{n-2}(S)\subset D\subset M$.
We may assume that $M\setminus D$ is path-connected and $\sk^{n-2}(S)\setminus D\ne\es$.
Note that $F'|_{M\setminus D}$ is an $S$-monomorphism.
Applying the Eliashberg's $h$-principle (Theorem~\ref{th:eliashberg}) to $f$ and $F'$ relative $D$,
we deform $f$ to a smooth map such that $\Sigma(f)=S$ and
which is $L$-equivalent to $\{\phi_i\}$ as a collection of locally compatible maps.
\end{proof}

\section{Generic maps of 3-manifolds}\label{s:3-manifolds}

\subsection{Classifications of rank 3 vector bundles}\label{s:3-bundles}

As we saw in Theorem~\ref{th:h-principle}, the problem of homotopy of a given map to a map with prescribed singularities
is reduced to verifying of an isomorphism of certain vector bundles.
In dimension 3 the last problem is equivalent to checking the equality of Stiefel-Whitney characteristic classes.

\begin{comment}
We will use the following construction from the obstruction theory (for details see e.\,g.~\cite[\S8.4]{spanier}).

\begin{proposition}
Let $X,Y$ be $CW$-complexes.
Suppose $Y$ is $(n-1)$-connected, $n\ge 1$, for $m\ge n$ the group $\pi_i(Y)$ is abelian and $H^m(X;\pi_n(Y))=0$ for $m>n$.

1. Given a map $f:X\to Y$, the obstruction for $f|_{\sk^n(X)}$ to be null-homotopic is an element $o_n(f)\in H^n(X;\pi_n(Y))$.
For every $f,g:X\to Y$ the maps $f|_{\sk^n(X)}$ and $g|_{\sk^n(X)}$ are homotopic if and only if $o(f)=o(g)$.

3. There is an element $\iota_n\in H^n(Y;\pi_n(Y))$ such that $o_n(f)=f^*(\iota_n)$ for every $f:X\to Y$.
\end{proposition}

\begin{proof}
Using the cellular approximation theorem, we may assume that $\sk^n(X)=X$,
that $\sk^{n-1}(Y)$ is a single point, and that $f$ sends $\sk^{n-1}(X)$ to this point.
Then for every $n$-cell $u\in C_n(X)$ set the value $o(f)$ at $u$ to be equal to $f(u)$.

Given $f,g:X\to Y$, if $o(f)-o(g)=\delta$

\end{proof}
\end{comment}

\begin{lemma}\label{l:3-bundles-isomorphic}
Let $E_1\to X$ and $E_2\to X$ be rank 3 vector bundles over a $CW$-complex $X$ of dimension~3.
Then $E_1\simeq E_2$ if and only if $w_1(E_1)=w_1(E_2)$ and $w_2(E_1)=w_2(E_2)$.
\end{lemma}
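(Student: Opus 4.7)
The plan is to prove this by obstruction theory on the bundle of isomorphisms. The ``only if'' direction is immediate since $w_i$ are vector bundle invariants, so the work is in the ``if'' direction. Pick Euclidean inner products on $E_1$ and $E_2$ and consider the fiber bundle $P=\mathrm{Iso}(E_1,E_2)\to X$ whose fiber over $x$ is the space of linear isometries $(E_1)_x\to(E_2)_x$; this is a bundle with fiber $O(3)$. An isomorphism $E_1\simeq E_2$ exists if and only if $P$ admits a continuous section.

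I would then construct a section of $P$ skeleton by skeleton. A section over $\sk^0(X)$ exists trivially. The primary obstruction to extending a section from $\sk^{k-1}(X)$ to $\sk^k(X)$ lives in $H^k\big(X;\pi_{k-1}(O(3))\big)$. The relevant homotopy groups are
\[
\pi_0(O(3))=\Z/2,\qquad \pi_1(O(3))=\pi_1(SO(3))=\Z/2,\qquad \pi_2(O(3))=\pi_2(SO(3))=0,
\]
the last because $SO(3)\simeq\RP^3$ has universal cover $S^3$ and $\pi_2(S^3)=0$. Thus, for $\dim X=3$ only two obstructions are potentially nontrivial, namely an element of $H^1(X;\Z/2)$ and an element of $H^2(X;\Z/2)$, while the obstruction in $H^3(X;\pi_2(O(3)))=0$ vanishes automatically.

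Next I would identify these two obstructions with the differences of Stiefel--Whitney classes. The first obstruction measures whether the orientation classes of $E_1$ and $E_2$ agree on each $1$-cell, so as a cocycle it represents $w_1(E_1)-w_1(E_2)=w_1(E_1)+w_1(E_2)$ in $H^1(X;\Z/2)$; hence it vanishes precisely when $w_1(E_1)=w_1(E_2)$. Given a section over $\sk^1(X)$, the second obstruction measures the relative Euler/spin obstruction on $2$-cells, and under the identification $\pi_1(O(3))=\Z/2$ it represents $w_2(E_1)-w_2(E_2)=w_2(E_1)+w_2(E_2)$, which vanishes by hypothesis. So the section extends over $\sk^2(X)$, and then over all of $X=\sk^3(X)$ automatically. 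Therefore $E_1\simeq E_2$.

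The step I expect to require the most care is the explicit identification of the second difference obstruction with $w_2(E_1)-w_2(E_2)$: a priori this obstruction depends on the chosen section over $\sk^1(X)$, and naive identification only gives an element in a coset of the image of some action of $H^1(X;\Z/2)$. The cleanest way to resolve this is to reduce to the universal case via the classifying map $X\to BO(3)\times BO(3)$ for the pair $(E_1,E_2)$ and to compute the obstruction classes for the universal pair using $H^*(BO(3);\Z/2)=\Z/2[w_1,w_2,w_3]$; naturality then transfers the identification to arbitrary $E_1,E_2$. The input $\pi_2(O(3))=0$ is what makes dimension $3$ special and causes the ``if'' direction to succeed with only $w_1$ and $w_2$ as data.
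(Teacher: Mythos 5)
Your overall strategy is the standard obstruction-theoretic one and, like the paper, it ultimately rests on $\pi_2(SO(3))=0$; the packaging is different, though: you work with the unoriented bundles directly, via sections of the isometry bundle $\mathrm{Iso}(E_1,E_2)$ with fiber $O(3)$, handling $w_1$ at the $\pi_0$-level and $w_2$ at the $\pi_1$-level, whereas the paper first kills $w_1$ by tensoring both bundles with the line bundle $L$ with $w_1(L)=w_1(E_1)$ (this changes $w_2$ of both by the same amount $w_1(L)^2$) and then compares the classifying maps into the oriented Grassmannian $Gr=BSO(3)$. The point of that reduction is exactly the step you flagged: after it, the $w_2$-comparison becomes the \emph{primary} obstruction for maps into a simply connected target, so it is well defined, independent of choices on the $1$-skeleton, and natural, and its identification with $w_2$ is a legitimate universal computation in $H^2(Gr;\Z_2)\cong\Z_2$ (the paper pins down the generator as $w_2$ by exhibiting an oriented rank-3 bundle with $w_2\neq0$).

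The genuine gap in your write-up is the proposed repair of the secondary-obstruction indeterminacy ``by reducing to the universal case via $X\to BO(3)\times BO(3)$.'' Over $BO(3)\times BO(3)$ the universal pair has $w_1(E^{(1)})\neq w_1(E^{(2)})$, so the first obstruction does not vanish, no section of the universal isometry bundle exists over the $1$-skeleton, and hence there is no ``second obstruction class of the universal pair'' to compute; naturality therefore has nothing to transfer. To make your route work you must either replace the universal object by one where the first obstruction vanishes --- e.g.\ the classifying space of the subgroup $\{(A,B)\in O(3)\times O(3)\ :\ \det A=\det B\}$, over which the orientation-comparison double cover of the isometry bundle is trivializable and the $H^2$-obstruction becomes primary --- or simply perform the paper's twist by $L$ first, after which your argument collapses to the paper's: a single primary obstruction in $H^2(X;\Z_2)$ equal to $w_2(E_1')+w_2(E_2')$, followed by the automatic extension over the $3$-skeleton since $\pi_2(SO(3))=0$. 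With either fix the proof is correct; as stated, the key identification is not justified.
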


\begin{proof}
Obviously, the characteristic classes of isomorphic vector bundles are equal.
So it remains only to prove the ``if'' part.

Suppose that $w_1(E_1)=w_1(E_2)$ and $w_2(E_1)=w_2(E_2)$.
Take the line bundle $L\to X$ such that $w_1(L)=w_1(E_1)$.
%by the splitting principle we have $w_1(\EE)=0$ and $w_2(\EE)=(w_1(E))^2+w_2(E)$, and similarly for $\EE'=E'\otimes L$.
Then for $E_1'=E_1\otimes L$ and $E_2'=E_2\otimes L$ we have
$w_1(E_1')=w_1(E_2')=0$ and $w_2(E_1')=w_2(E_2')$.
Let us deduce that $E_1'\simeq E_2'$,
then the statement of the lemma will follow from the isomorphisms $E_1'\otimes L\simeq E_1$ and $E_2'\otimes L\simeq E_2$.

Denote the oriented grassmannian $(3,\infty)$ by $Gr$ and let $E\to Gr$ be the universal bundle.
We have to check that the maps $\eps_1:X\to Gr$ and $\eps_2:X\to Gr$ corresponding to $E_1'$, respectively $E_2'$,
are homotopic if $\eps_1^*w_2(E)=\eps_2^*w_2(E)$.

Since $Gr$ is $1$-connected, we may assume that $\eps_1|_{\sk^1(X)}\equiv\eps_2|_{\sk^1(X)}$.
By the obstruction theory,
the homotopy between $\eps_1$ and $\eps_2$ can be extended from $\sk^1(X)$ to $\sk^2(X)$ if and only if $o_1=o_2$,
where $o_1,o_2\in H^2(X;\pi_2(Gr))$ are the first obstructions for $\eps_1$, respectively $\eps_2$, to be null-homotopic
(for details see e.\,g.~\cite[\S8.4]{spanier}).

Note that by the Hurewicz theorem and the long exact sequence
we have $H_2(Gr;\Z)=\pi_2(Gr)=\pi_1(SO(3))=\Z_2$.
%and by the universal coefficient theorem $H^2(Gr;\Z_2)=\Hom(H_2(Gr;\Z),\Z_2)$.
%Therefore
So by the universal coefficient theorem $H^2(Gr;\pi_2(Gr))=\Z_2$, denote its generator by $a$.
By the construction of $o_1$ and $o_2$ we have $o_1=\eps_1^*(a)$ and $o_2=\eps_2^*(a)$ (see \cite[\S8.4,~Th.\,2]{spanier}).

On the other hand, there are oriented rank 3 vector bundles with $w_2\ne0$
(e.\,g.\ the sum of two copies of the nonorientable line bundle over $\RP^2$ with the trivial line bundle),
so we have $w_2(E)\ne0$, therefore $a=w_2(E)$.
Thus $o_1=\eps_1^*w_2(E)=w_2(E_1')$ and $o_2=\eps_2^*w_2(E)=w_2(E_2')$.
So by the equality $w_2(E_1')=w_2(E_2')$ the maps $\eps|_{\sk^2(X)}$ and $\eps'|_{\sk^2(X)}$ are homotopic.

Finally, there is no obstruction to extension of the homotopy
between $\eps_1$ and $\eps_2$ from $\sk^2(X)$ to $\sk^3(X)$ since $\pi_3(Gr)=\pi_2(SO(3))=0$.
\end{proof}

\subsection{Generic germs in dimension 3}\label{s:3-germs}

Let $M,N$ be closed 3-manifolds.
Suppose we are given a generic map $f:M\to N$.
%Let $S=\Sigma^1(f)$, $C=\Sigma^{1,1}(f)$ and $P=\Sigma^{1,1,1}(f)$.
Then $\Sigma^{1}(f)\subset M$ is a closed 2-submanifold,
$\Sigma^{1,1}(f)\subset \Sigma^{1}$ is a closed 1-submanifold
and $\Sigma^{1,1,1}(f)\subset \Sigma^{1,1}$ is a discrete subset.

%Suppose we are given a generic map $\phi:U\to V$ from a neighborhood $S\subset U\subset M$ to some 3-manifold $V$,
Recall that the points of $\Sigma^{1,0}(f)=\Sigma^{1}(f)\setminus \Sigma^{1,1}(f)$ are called {\it folds},
the germ of $f$ at each of these points is locally equivalent
to the germ of the map $\R^3\to\R^3,\ (x,y,z)\mapsto(x^2,y,z)$ at the origin.
The points of $\Sigma^{1,1,0}(f)=\Sigma^{1,1}(f)\setminus \Sigma^{1,1,1}(f)$ are called {\it cusps},
for them in some local coordinates $f$ can be written as $(x,y,z)\mapsto(x^3+xy,y,z)$.
And the points of $\Sigma^{1,1,1,0}(f)=\Sigma^{1,1,1}(f)$ are called {\it swallowtails},
the singularities of $f$ at these points are of the form $(x,y,z)\mapsto(x^4+x^2y+xz,y,z)$.

%All strata of $\Sigma\subset J^r(M,N)$ has codimension $>3$,
%except $\Sigma^{1,0}$, $\Sigma^{1,1,0}$ and $\Sigma^{1,1,1,0}$.
%Therefore
%Every Thom-Bordman map between 3-manifolds is generic.
For every generic $f:M\to N$ the set $\Sigma(f)$ consists only of the three described strata (possibly, empty).
The germs of $f$ at different points of the same stratum are locally equivalent.
Also these singularities are stable.
This means that for every map $f':U\to V$,
which is sufficiently close to $f$ in $C^\infty$-topology,
its strata $\Sigma^{1,0}(f')$, $\Sigma^{1,1,0}(f')$ and $\Sigma^{1,1,1,0}(f')$
are close to $\Sigma^{1,0}(f)$, $\Sigma^{1,1,0}(f)$ and $\Sigma^{1,1,1,0}(f)$, respectively,
and the germs of $f'$ and $f$ at corresponding strata are equivalent.
%must have stratumwise equivalent germ at $S'\supset C'\supset P'$, which are close to $S$, $C$ and $P$, respectively.

\vspace{.5em}

\begin{figure}[h]
\center{\includegraphics{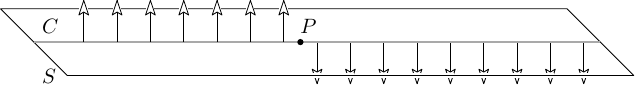}}
\caption{A characteristic vector field near a point of $P$.}\label{fig:characteristic-field}
\end{figure}

Suppose we are given a closed 2-submanifold $S\subset M$,
a closed 1-submanifold $C\subset S$ and a discrete subset $P\subset C$.

\begin{definition}\label{characteristic-field}
Given the triple $S,C,P$, a {\it characteristic field}
is a unit vector field in $M$ defined on $C\setminus P$ and normal to $S$.
This vector field must ``change the direction'' when we go through any point of~$P$, see Fig.~\ref{fig:characteristic-field}.

Suppose for a neighborhood $U\supset C$ and a $3$-manifold $V$ we have a generic map $\phi:U\to V$
such that $\Sigma^{1}(\phi)=S\cap U$, $\Sigma^{1,1}(\phi)=C$ and $\Sigma^{1,1,1}(\phi)=P$.
The {\it characteristic field of $\phi$} is the unit vector field normal to $S$
whose image is directed ``outside'' the cusp, see Fig~\ref{fig:characteristic-field-in-cusp}.
We denote it by $\nu(\phi)$.
\end{definition}

\begin{figure}[h]
\center{\includegraphics{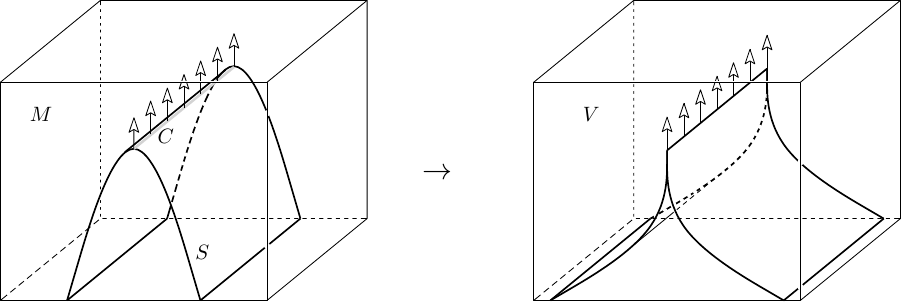}}
\caption{The characteristic field $\nu(\phi)$ and its image under $d\phi$.}
\label{fig:characteristic-field-in-cusp}
\end{figure}

Note that near a swallowtail point $\nu(\phi)$ has different directions on the two components of $C\setminus P$,
see Fig~\ref{fig:characteristic-field-in-swallowtail}.
So the characteristic field of a generic map is actually a characteristic field.

\begin{figure}[h]
\center{\includegraphics{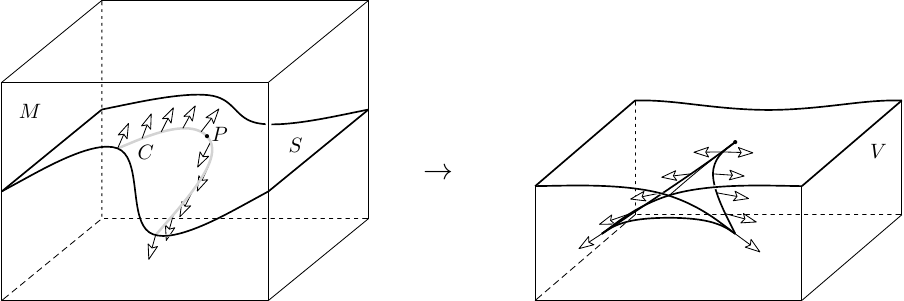}}
\caption{The characteristic field of a generic map near a swallowtail point and its image.}
\label{fig:characteristic-field-in-swallowtail}
\end{figure}

\begin{proposition}\label{pr:scpnu-exists}
Given a characteristic field $\nu_0$ and a tubular neighborhood $U\supset C$
one can construct a generic map $\phi:U\to V$ to some 3-manifold $V$
such that $\Sigma^1(\phi)=S\cap U$, $\Sigma^{1,1}(\phi)=C$, $\Sigma^{1,1,1}(\phi)=P$ and $\nu(\phi)=\nu_0$.
%and the characteristic field of $\phi$ is equal to the prescribed one.
\end{proposition}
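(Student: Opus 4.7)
The plan is to construct $\phi$ piecewise from the standard Morin normal forms for folds, cusps, and swallowtails, and then assemble the local targets into a single $3$-manifold via Lemma~\ref{l:hausdorff-target}. First I cover $C$ by two sorts of open sets contained in $U$: a pairwise disjoint open neighborhood $W_p\ni p$ in $M$ for each swallowtail $p\in P$, and a pairwise disjoint tubular neighborhood $W_j\subset M$ around each connected arc of $C\setminus\bigcup_p W_p$, arranged so that each $W_j$ meets only the $W_p$'s at the endpoints of its arc.

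On each $W_p$ I pick coordinates $(x,y,z)$ centered at $p$ which send the triple $(S,C,p)$ to the singular stratification of the standard swallowtail $\phi_0(x,y,z)=(x^4+x^2y+xz,\,y,\,z)$. Such coordinates exist because any two nested triples of a point inside a smooth curve inside a smooth surface inside a smooth $3$-manifold are locally diffeomorphic (tubular neighborhood theorems plus a ``bending'' diffeomorphism taking straightened data to the swallowtail geometry). Since the characteristic field of $\phi_0$ switches sign between the two branches of its cusp locus at the origin and $\nu_0$ does too by definition, a reflection in one coordinate arranges that $\nu_0$ agrees with the characteristic field of $\phi_0$; set $\phi_p:=\phi_0$ on $W_p$. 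On each $W_j$ I similarly choose coordinates with $C\cap W_j$ the $z$-axis, $\partial_y$ normal to $S$ and equal to $\nu_0$ along $C$, and define $\phi_j:W_j\to V_j=\mathbb{R}^3$ by the cusp normal form $(x,y,z)\mapsto(x^3+xy,\,y,\,z)$. Lemma~\ref{l:generic-normal-form} ensures each $\phi_p$ and $\phi_j$ is a generic map with the prescribed singular strata and characteristic field on its chart.

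On each nonempty intersection $W_p\cap W_j$, both $\phi_p$ and $\phi_j$ are their own normal forms for the same cusp germ along $C$ with matching characteristic fields, so Proposition~\ref{pr:normal-form-universal-property} yields a diffeomorphism $\alpha_{p,j}:V_{p,j}\to V_{j,p}$ between open subsets of $V_p$ and $V_j$ conjugating $\phi_p$ to $\phi_j$. Since the $W_p$'s and the $W_j$'s are each pairwise disjoint, no triple intersections arise and the cocycle condition on the $\alpha_{p,j}$'s is vacuous. Thus $\{\phi_p\}\cup\{\phi_j\}$ is a collection of compatible germs at $C$ whose restrictions to $C$ are immersions, so Lemma~\ref{l:hausdorff-target} assembles them into a single generic map $\phi:U'\to V$ to a $3$-manifold $V$ on a neighborhood $U'\supset C$; shrinking $U$ to lie inside $U'$ completes the construction.

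The main obstacle is the swallowtail step: producing coordinates at each $p\in P$ that simultaneously put $(S,C,p)$ into the standard swallowtail stratification and realize $\nu_0$ as the model's characteristic field. The geometric half follows from the local classification of nested smooth triples of prescribed codimensions; the orientation half reduces to a single sign choice, since both $\nu_0$ and the model's characteristic field switch direction at $p$.
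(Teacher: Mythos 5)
Your overall architecture (local Morin models near the swallowtails and along the cusp arcs, then gluing the targets into one Hausdorff $3$-manifold) is the same as the paper's, which takes $U_0\supset P$, $U_1\supset C\setminus P$ and glues $V_0$ with $V_1$. But there is a genuine gap at the gluing step. To invoke Proposition~\ref{pr:normal-form-universal-property} on an overlap $W_p\cap W_j$ you need the germs of $\phi_p$ and $\phi_j$ along $C\cap W_p\cap W_j$ to be \emph{globally $L$-equivalent}, i.e.\ to agree after composing with a diffeomorphism of the targets only. Your justification --- that both maps are normal forms ``for the same cusp germ'' with the same singular locus and the same characteristic field --- does not give this: it only gives that the two germs are (locally) \emph{equivalent}, via diffeomorphisms of both source and target. $L$-equivalence is strictly stronger; for instance, left composition with a target diffeomorphism preserves the line field $\ker d\phi$ along $\Sigma(\phi)$, so two germs can have identical $(S,C,P,\nu_0)$ and still fail to be $L$-equivalent because their kernel fields along the fold surface differ (compare $(x,y,z)\mapsto(x^2,y,z)$ with $(x,y,z)\mapsto(x^2,y-x,z)$: same fold surface $\{x=0\}$, kernels $\partial_x$ versus $\partial_x+\partial_y$, and no target diffeomorphism relates them since one is even in $x$ and the other is not). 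Since your charts on $W_p$ and on $W_j$ are chosen independently, subject only to matching $(S,C,P,\nu_0)$, the induced kernel fields (and higher-order germ data) on the overlap will in general disagree, so Proposition~\ref{pr:normal-form-universal-property} is not applicable and Lemma~\ref{l:hausdorff-target} cannot be fed a compatible collection. Note that Lemma~\ref{l:if-locally-then-stratumwise} cannot rescue this either: it upgrades local $L$-equivalence to stratumwise $L$-equivalence, not plain equivalence to $L$-equivalence (that upgrade is exactly the open question in \S\ref{s:equivalence-of-germs}).

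The paper's proof contains precisely the step you are missing: after observing that the two germs on the overlap are globally \emph{equivalent}, it allows ``composing $\phi_i$ with some changing of coordinates in the source'' to make them $L$-equivalent before gluing the targets. Concretely, you would need to take the source diffeomorphism $h$ furnished by the equivalence on the overlap (it preserves $S$, $C$ and the side data $\nu_0$), extend or taper it to a diffeomorphism of $W_j$ preserving the prescribed strata, and replace $\phi_j$ by $\phi_j\circ h^{-1}$ (doing this coherently at both swallowtail ends of the arc), or alternatively choose the chart on $W_j$ so that near its endpoints it extends the germ data already induced by the swallowtail charts. A second, much smaller issue: the ``reflection in one coordinate'' at a swallowtail does not literally preserve the model's curved fold surface and cusp curve; one should instead flip the co-orientation by a diffeomorphism obtained by straightening the flag $(\Sigma,\Sigma^{1,1},0)$, reflecting, and returning --- this is easily repaired, unlike the gluing step above.
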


\begin{proof}
Take tubular neighborhoods $U_0\supset P$ and $U_1\supset C\setminus P$.
There are $n$-manifolds $V_0,V_1$
and generic maps $\phi_i:U_i\to V_i,\ i=0,1$,
which have the prescribed singularities and characteristic vectors at all points.   %(one can take $V_0=U_0$, ).

Note that the germs of $\phi_0$ and $\phi_1$ at $(C\setminus P)\cap U_0$ are globally equivalent.
Composing $\phi_1$ with a suitable diffeomorphism of $U_1$,
we can make the germs of $\phi_0$ and $\phi_1$ at $(C\setminus P)\cap U_0$ globally $L$-equivalent.

Note that the germ of $\phi_1$ is $(C\setminus P)\cap U_0$-normal.
We set $V$ to be the result of gluing $V_0$ with $V_1$
via the map $\beta$ from Proposition~\ref{pr:normal-form-universal-property}
and let $U=U_0\cup U_1$.
If needed, we replace $U_0$, $U_1$, $V_0$ and $V_1$ by smaller ones,
so we may assume that $V$ is Hausdorff.
The maps $\phi_0$ and $\phi_1$ are compatible on $U_0\cap U_1$,
so together they define a desired map $\phi:U\to V$.
\end{proof}

\begin{proposition}\label{pr:swallowtails-parity}
There is a characteristic field for the triple $S,C,P$
if and only if for every component $C'\subset C$ we have $[C']\cdot[S]\equiv |P\cap C'|\mod2$.
\end{proposition}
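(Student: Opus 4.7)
The plan is to work one component at a time: a characteristic field on $C$ is the same as a characteristic field on each component $C'\subset C$ independently, so it suffices to prove the claim for a single $C'\simeq S^1$.

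Fix such a $C'$ and let $L=\nu_S|_{C'}$ denote the normal line bundle of $S$ in $M$ restricted to $C'$. A unit vector field normal to $S$ along $C'\setminus P$ is the same as a section over $C'\setminus P$ of the associated double cover $\pi\colon \widetilde{C'}\to C'$ (the unit normal sphere bundle of $S$ along $C'$), and the ``changes direction'' condition at $p\in P$ says precisely that the two one-sided limits lie in the two different points of $\pi^{-1}(p)$. I would analyze this via monodromy. Pick a basepoint $x_0\in C'\setminus P$ and a preimage $\widetilde x_0\in \pi^{-1}(x_0)$; traverse $C'$ once, lifting continuously and swapping sheets at each point of $P\cap C'$. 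After one loop we return to a point of $\pi^{-1}(x_0)$ which differs from $\widetilde x_0$ by $|P\cap C'|$ sheet-swaps composed with the monodromy of $\pi$. Consistency of the lift holds iff these cancel, i.e.\
\[
|P\cap C'|\equiv w_1(L)([C'])\pmod 2.
\]
Conversely, a piecewise construction in local trivializations of $L$ shows that whenever this parity condition is satisfied, an actual characteristic field can be written down.

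It remains to identify $w_1(L)([C'])$ with $[C']\cdot[S]\bmod 2$. For the codimension-one submanifold $S\subset M$ the Poincar\'e dual $[S]\in H^1(M)$ pulls back along the inclusion $j\colon S\hookrightarrow M$ to $w_1(\nu_S)\in H^1(S)$ (the standard identity that the Thom class restricts to the Euler class of the normal bundle). Pulling back further along $i'\colon C'\hookrightarrow S$ gives $(j\circ i')^*[S]=w_1(L)$, and evaluating on $[C']$ yields exactly the mod $2$ intersection number $[C']\cdot[S]$ in $M$.

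The main technical subtlety is the careful translation of the ``changes direction'' condition into a single lifting problem for the double cover $\pi$; once that reformulation is made, both directions reduce to well-known monodromy and Poincar\'e-duality calculations.
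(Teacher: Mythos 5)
Your proof is correct, but it takes a different route from the paper. The paper argues geometrically in both directions: given a characteristic field $\nu$, it pushes the component $C'$ off $S$ along $\nu$, so that the pushed-off curve meets $S$ transversally exactly at the points of $P\cap C'$ (these are where the field switches sides of $S$), which gives $[C']\cdot[S]\equiv|P\cap C'|$; conversely, it perturbs $C'$ to a curve $C''$ transverse to $S$ with $C''\cap S=P\cap C'$ and reads off the characteristic field from which side of $S$ each arc of $C''$ lies on. You instead reformulate a characteristic field along $C'$ as a section over $C'\setminus P$ of the orientation double cover of the normal line bundle $L=\nu_S|_{C'}$ with a prescribed sheet swap at each point of $P$, so that existence becomes a monodromy computation, $|P\cap C'|\equiv\langle w_1(L),[C']\rangle\bmod 2$, and then you identify $\langle w_1(L),[C']\rangle$ with $[C']\cdot[S]$ via the standard fact that the Poincar\'e dual of $S$ restricts to $w_1(\nu_S)$ on $S$. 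The trade-off: the paper's push-off argument is shorter and stays entirely at the level of transverse curves, while your obstruction-theoretic version isolates the precise invariant ($w_1$ of the normal bundle along $C'$) that controls existence and makes the equivalence with the homological condition a cited characteristic-class identity rather than an implicit perturbation argument; it also delivers existence and the parity condition in one stroke from the lifting criterion. Both are complete proofs.
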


\begin{proof}
Suppose there is a characteristic field $\nu$.
Passing a component $C'\subset C$ along $\nu$ we set it to the general position with $S$.
Then $C'\cap S=P\cap C'$.

To prove the ``if'' part
we slightly perturb every component $C'$ to obtain a new curve $C''\subset M$
which is transversal to $S$ and so that $C''\cap S=P\cap C'$.
Then
%the arcs of $C''\setminus P$
the arcs of $C''$ in $M\setminus S$
show the direction of a characteristic field.
%note that for a trivial (resp., nontrivial) line bundle over $S^1$
%there is a section that transversally intersects the zero section at even (resp., odd) number of points.
%Clearly, $[C']\cdot[S]=1$ if and only if the normal bundle to $S$ restricted to $C'$ is nontrivial.
\end{proof}

Note that if the property $[C']\cdot[S]\equiv |P\cap C'|\mod2$ holds for all components $C'\subset C$,
then the triple $S,C,P$ admits exactly two (opposite) characteristic fields.

Also note that for generic maps of surfaces the notion of characteristic vector is defined similarly,
see \cite[\S4.3]{eliashberg} or \cite[\S1]{ryabichev-surfaces}.

\subsection{Characteristic classes of the twisted tangent bundle}\label{s:3-w-classes}

If $n=2$ or $3$, then the twisted tangent bundle is well defined by a characteristic vector field.
More precisely, if two collections of locally compatible maps with the same singular loci
have the same characteristic vector fields,
then the corresponding twisted tangent bundles are isomorphic.
Next we will show how to compute their characteristic classes.
%of the twisted tangent bundles
%using information about the characteristic vector field.

For generic maps of surfaces, the isomorphism class of the twisted tangent bundle
depends on the choice of characteristic vectors of the cusp points,
since the Euler class of the obtained bundle depends on this choice \cite[\S3]{ryabichev-surfaces}.
%However, as we will see in the next subsection, in dimension 3 this choice does not matter.

However, in dimension 3 the isomorphism class of the twisted tangent bundle depends only on the choice of $S$ and $C$,
in view of  Lemma~\ref{l:3-bundles-isomorphic}, Proposition~\ref{pr:scpnu-exists} and the following two assertions.

%We state the following %two lemmas
%lemma for manifolds and vector bundles of arbitrary dimensions,
%but we use %them
%it only for rank 3 vector bundles over 3-manifolds.

\begin{proposition}\label{pr:bundles-excision}
Let $E,E'\to X$ be rank $m$ vector bundles over an $n$-manifold $X$.
Suppose for a tubular neighborhood $U$ of a $k$-submanifold $Y\subset X$
the restrictions $E|_{X\setminus U}$ and $E'|_{X\setminus U}$ are isomorphic.
Then $w_i(E)=w_i(E')$ for all $i<n-k$.
\end{proposition}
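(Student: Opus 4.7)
The strategy is to reduce the equality $w_i(E)=w_i(E')$ in $H^i(X;\Z/2)$ to its restriction in $H^i(X\setminus U;\Z/2)$, where it holds by hypothesis and naturality of Stiefel-Whitney classes. The entire argument works with $\Z/2$ coefficients, so orientability of the normal bundle to $Y$ is not required.

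The main step is to prove that the restriction homomorphism $H^i(X;\Z/2)\to H^i(X\setminus U;\Z/2)$ is injective whenever $i<n-k$. From the long exact sequence of the pair $(X,X\setminus U)$, it suffices to show that $H^i(X,X\setminus U;\Z/2)=0$ in this range. I would establish this in three standard moves. First, because $U$ is a tubular neighborhood of $Y$, the inclusion $X\setminus U\hookrightarrow X\setminus Y$ is a homotopy equivalence (push radially out along the normal bundle), so $H^*(X,X\setminus U;\Z/2)\cong H^*(X,X\setminus Y;\Z/2)$. Second, since $X\setminus U$ is closed and contained in the open set $X\setminus Y$, excision yields $H^*(X,X\setminus Y;\Z/2)\cong H^*(U,U\setminus Y;\Z/2)$. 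Third, identifying $U$ with the total space of the normal bundle $\nu$ of $Y$ in $X$, which has rank $n-k$, the Thom isomorphism with $\Z/2$ coefficients gives $H^i(U,U\setminus Y;\Z/2)\cong H^{i-(n-k)}(Y;\Z/2)$, which vanishes for $i<n-k$.

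Once injectivity of the restriction is established, the proof is completed by naturality. The hypothesis $E|_{X\setminus U}\cong E'|_{X\setminus U}$ gives $w_i(E|_{X\setminus U})=w_i(E'|_{X\setminus U})$, and naturality under the inclusion $j\colon X\setminus U\hookrightarrow X$ identifies these with $j^*w_i(E)$ and $j^*w_i(E')$. Injectivity of $j^*$ in degrees $i<n-k$ then forces $w_i(E)=w_i(E')$.

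There is essentially no obstacle here beyond verifying that the pair $(X,X\setminus U)$ is tame enough to apply excision and Thom isomorphism; this is automatic because $U$ is a smooth tubular neighborhood of a submanifold in a manifold. One minor point worth mentioning in the write-up is that $Y$ need not be closed in $X$, but since a tubular neighborhood by definition retracts onto $Y$ via a disk-bundle structure, the Thom isomorphism still applies to the pair $(U,U\setminus Y)$.
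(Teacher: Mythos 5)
Your argument is correct and follows the same overall skeleton as the paper's: both reduce the statement to the vanishing $H^i(X,X\setminus U;\Z/2)=0$ for $i<n-k$, hence to injectivity of the restriction $H^i(X)\to H^i(X\setminus U)$, and finish by naturality of $w_i$. The middle step, however, is done with a different tool. The paper excises to the pair $(\overline U,\partial\overline U)$ and invokes Poincar\'e--Lefschetz duality, $H^i(\overline U,\partial\overline U)\simeq H_{n-i}(\overline U)\simeq H_{n-i}(Y)=0$; you instead pass to $(X,X\setminus Y)$ via the radial deformation retraction, excise to $(U,U\setminus Y)$, and apply the mod $2$ Thom isomorphism $H^i(U,U\setminus Y;\Z/2)\simeq H^{i-(n-k)}(Y;\Z/2)$. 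The duality route is shorter, but as written it uses compactness of $\overline U$ (equivalently of $Y$): for noncompact $Y$ the relevant duality pairs $H^i(\overline U,\partial\overline U)$ with locally finite (Borel--Moore) homology, which is not computed by the homotopy type of $Y$ alone. Your Thom-isomorphism route needs neither compactness nor orientability of the normal bundle, so it gives the proposition in slightly greater generality (in the paper's applications $Y$ is compact, so both work). One small correction to your closing remark: what your argument actually needs is that $Y$ be closed in $X$, and that is used not in the Thom isomorphism (where $Y$ is automatically closed in $U$, being the zero section) but in your first two steps --- the radial retraction of $X\setminus Y$ onto $X\setminus U$ is continuous, and excision applies, precisely because $\overline Y\subset U$, so that $X\setminus U\subset\int(X\setminus Y)$.
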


\begin{proof}
% ARGUMENT 1
%The boundary $\partial\overline U$ is an $(n-k-1)$-sphere bundle over $Y$.
%Therefore the homomorphism $H^i(Y)\to H^i(\partial\overline U)$ is monomorphic for $i<n-k$.
%Note that the composition $H^i(U)\simeq H^i(Y)\to H^i(\partial\overline U)$
%coincides with the homomorphism induced by the inclusion $\partial\overline U\subset U$.
%
% ARGUMENT 2
%Note that $U$ can be viewed as the total space of the normal bundle to $Y$ of rank $n-k$.
%So for $i<n-k$ we have $H^i(\overline U,\partial\overline U)=0$.
%And by excision $H^i(\overline U,\partial\overline U)\simeq H^*(X,X\setminus U)$.
%
% ARGUMENT 3
Note that $\overline U$ deformation retracts onto $Y$.
Using Poincar\'e duality, for $i<n-k$ we have
$$H^i(\overline U,\partial\overline U)\simeq H_{n-i}(\overline U)\simeq H_{n-i}(Y)=0.$$
And by excision we have $H^i(\overline U,\partial\overline U)\simeq H^i(X,X\setminus U)$.

%We can see that
Since $H^i(X,X\setminus U)=0$ for every $i<n-k$,
%and therefore
the homomorphism $H^i(X)\to H^i(X\setminus U)$
induced by the inclusion $(X\setminus U)\subset X$ has zero kernel.
To complete the proof we apply the functoriality of $w_i$ to the last inclusion.
\end{proof}

Now let $M$ be a connected 3-manifold (for possibly disconnected $M$ the proof of the following lemma is similar).
Suppose we are given a closed 2-submanifold $S\subset M$ and a closed 1-submanifold $C\subset S$.
Take tubular neighborhoods $C\subset U_1\subset M$ and $(S\setminus C)\subset U_2\subset M$.

Then suppose there are generic maps $\phi_i:U_i\to V_i$ to some 3-manifolds $V_i$, $i=1,2$,
such that $\{\phi_1,\phi_2\}$ is a collection of locally compatible maps and
such that $\Sigma^1(\phi_2)=\Sigma^{1,0}(\phi_2)=S\setminus C$ and $\Sigma^{1,1}(\phi_1)=C$.
Denote the twisted tangent bundle $T^{\{\phi_1,\phi_2\}}M$ by $E$.

\begin{lemma}\label{l:w-of-t-phi-m}
We have $w_1(E)=w_1(M)+[S]$ and
$w_2(E)=w_2(M)+[S]^2+w_1(M)\cdot[S]+[C]$.
\end{lemma}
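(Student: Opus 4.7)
The plan is to express both $w_1(E)$ and $w_2(E)$ as Poincar\'e duals of degeneracy loci of the canonical bundle map $\Phi\colon TM\to E$ that is the identity on $M\setminus U'$ and equals $d\phi_i$ on $\bar U'\cap U_i$. The weak compatibility of $\{\phi_1,\phi_2\}$ ensures that $\Phi$ is well-defined and has corank exactly $1$ along $S$ (and is an isomorphism elsewhere).

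For $w_1$ I would note that $\det\Phi$ is a section of the line bundle $\det E\otimes\det TM$ (canonically trivial outside $U'$, using that $L\otimes L$ is trivial mod~$2$) whose zero locus is exactly $S$; this immediately gives $w_1(E)+w_1(M)=[S]$, which is the first formula.

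For $w_2$ I would choose generic vector fields $v_1,v_2$ on $M$ whose linear dependence locus $Z\subset M$ is a smooth closed $1$-submanifold Poincar\'e dual to $w_2(M)$, and form the sections $s_i:=\Phi(v_i)$ of $E$. Their dependence locus is $Z\cup Z_K$, where $Z_K\subset S$ is the locus at which the plane $\mathrm{span}(v_1,v_2)$ meets the kernel line bundle $K:=\ker d\phi|_S$ nontrivially; hence $w_2(E)-w_2(M)$ is the Poincar\'e dual in $M$ of $[Z_K]$. I would then view $Z_K$ as the zero locus of the natural line bundle map $K\to TM|_S/\mathrm{span}(v_1,v_2)$ (a bona fide line bundle off the finite set $Z\cap S$), whose class in $H^1(S)$ equals $w_1(K)+i^*w_1(M)$, since $\mathrm{span}(v_1,v_2)$ is a trivializable rank-$2$ subbundle of $TM|_S$.

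The last step is to identify $w_1(K)$. On the fold stratum $S\setminus C$ the kernel $K$ coincides with the normal bundle $\nu_S$, whereas at the cusp and swallowtail points $K\subset TS$. Inspecting the local normal forms $(x,y,z)\mapsto(x^3+xy,y,z)$ and $(x,y,z)\mapsto(x^4+x^2y+xz,y,z)$ shows that the projection $K\to\nu_S:=TM|_S/TS$, viewed as a section of $K^*\otimes\nu_S$, vanishes transversally along $C$; this gives $w_1(K)=w_1(\nu_S)+[C]_S$ in $H^1(S)$. Together with the projection formula for the Gysin map $i_!\colon H^*(S)\to H^{*+1}(M)$ (where $i\colon S\hookrightarrow M$) and $w_1(\nu_S)=i^*[S]$ this yields
\[
w_2(E)-w_2(M)=i_!w_1(\nu_S)+i_!([C]_S)+i_!(i^*w_1(M))=[S]^2+[C]+w_1(M)\cdot[S],
\]
as claimed. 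The main technical obstacle will be verifying the transversality of $K\to\nu_S$ at the swallowtail points $P\subset C$, where $K$ becomes tangent not only to $S$ but also to $C$; a direct inspection of the swallowtail local form is needed to confirm that the coefficient function ($12x^2+2y$ in these coordinates) vanishes to exactly first order along $C$ on $S$, so that the zero cycle represents $[C]_S$ rather than some perturbation of it.
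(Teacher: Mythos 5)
Your proof takes a genuinely different route from the paper's and, up to one step discussed below, it is correct. The paper proceeds by introducing an auxiliary bundle $E'$ (regluing $TM$ along $S$ by the reflection in the normal direction), computing $w_1(E'),w_2(E')$ by cutting $M$ along $S$ and splitting off a line bundle spanned by an inward normal field ($TM\simeq L\oplus E_2$, $E'\simeq L'\oplus E_2$ with $L'$ trivial, $w_1(L)=[S]$), and then computing the remaining difference $w_2(E)-w_2(E')=[C]$ by exhibiting explicit pairs of sections over a tubular neighborhood of $C$ and localizing via Proposition~\ref{pr:bundles-excision}. You instead work directly with the canonical morphism $\Phi:TM\to E$, get $w_1(E)+w_1(M)=[S]$ from the transversal vanishing of $\det\Phi$ along $S$, and get the $w_2$ formula from the degeneracy locus of $(\Phi v_1,\Phi v_2)$, the kernel line bundle $K=\ker\Phi|_S$, and the Gysin/projection formula. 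Your identification $w_1(K)=w_1(\nu_S)+[C]_S$ via the section $K\to\nu_S$ of $K^*\otimes\nu_S$ is right, including the transversality check at swallowtails (the function is indeed $12x^2+2y$ on $S$ in the normal form, with nowhere-vanishing differential, and $6x$ at cusps), and the final bookkeeping $i_!(w_1(\nu_S))=[S]^2$, $i_!([C]_S)=[C]$, $i_!(i^*w_1(M))=w_1(M)\cdot[S]$ is correct. Your approach is more uniform and avoids the auxiliary bundle $E'$ and the separate excision step; the paper's approach avoids any genericity analysis for constrained sections by localizing each difference of classes separately.

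The step you assert without justification is that the dependence locus of $s_1=\Phi(v_1)$, $s_2=\Phi(v_2)$ is Poincar\'e dual to $w_2(E)$. These are not generic sections of $E$: viewed as a section of $\Hom(\underline{\R}^2,E)$, the pair fails to be transversal to the rank-$\le 1$ stratum exactly at the finitely many points of $Z\cap S$, since $\overline{Z_K}$ passes through these points (there all three $2\times2$ minors vanish), so the degeneracy locus is locally two crossing arcs rather than a $1$-manifold, and the standard ``transversal degeneracy locus is dual to $w_2$'' statement does not apply verbatim. This is fixable: perturb $(s_1,s_2)$ to a transversal pair in small balls around these points and note that in a ball any two relative $1$-cycles with the same boundary are homologous mod $2$, so the class of $Z\cup\overline{Z_K}$ is the dual of $w_2(E)$ after all; but the argument must be made, since it is precisely where the constraint ``both sections factor through $\Phi$'' bites. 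A smaller point of the same kind: your computation of $[Z_K]_S$ as $w_1(K)+i^*w_1(M)$ is carried out on $S$ minus the finite set $Z\cap S$, so you should record that $H^1(S;\Z_2)\to H^1(S\setminus(Z\cap S);\Z_2)$ is injective, which makes the identification valid in $H^1(S)$.
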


Note that, in particular, the characteristic classes do not depend on $\Sigma^{1,1,1}(\phi_1)$ and $\nu(\phi_1)$.

\begin{proof}
We define a vector bundle $E'$ over $M$ obtained by regluing $TM$ along $S$
using the orthogonal involution of $TM|_S$ which is identity on $TS$.
%Clearly, the restrictions of $E$, $TM$ and $E'$ to $M\setminus S$ are canonically isomorphic.
Note that the restrictions $E|_{M\setminus U_1}$ and $E'|_{M\setminus U_1}$ are isomorphic,
so by Proposition~\ref{pr:bundles-excision} we have $w_1(E')=w_1(E)$.
Next we will compute $w_1(E')$ and $w_2(E')$,
and then we will prove that $w_2(E)-w_2(E')=[C]$.

We cut $M$ along $S$ and denote the obtained manifold with boundary by $M'$.
Take the unit vector field on $\partial M'$ which is normal to $\partial M'$ and directed into $M'$.
It can be extend to a nonzero vector field $v$ on the complement in $M'$
of a finite collection of interior points $x_1,x_2,\ldots\in \int(M')$.
Denote $M\setminus{\{x_1,x_2,\ldots\}}$ by $M^\circ$.

Take the line subbundle of $TM'|_{M'\setminus\{x_1,x_2,\ldots\}}$ spanned by $v$.
We denote its projections to $TM|_{M^\circ}$ and to $E'|_{M^\circ}$ by $L$, respectively $L'$.
Note that, since $v$ is orthogonal to $\partial M'$, the subbundles $L$ and $L'$ are well defined over $S$.

We can see that $TM|_{M^\circ}\simeq L\oplus E_2$ and
$E'|_{M^\circ}\simeq L'\oplus E_2$ for some rank~$2$ vector bundle $E_2\to M^\circ$.
Then
$$w_i(L\oplus E_2)=\iota^*w_i(M)
\text{ \ and \ } w_i(L'\oplus E_2)=\iota^*w_i(E') \text{ \ for \ }i=1,2,\eqno (**)$$
where $\iota:M^\circ\to M$ is the imbedding.

Now we can compute $w_1(E')-w_1(M)$ and $w_2(E')-w_2(M)$.
By the construction% of $v$ and $E'$
, the line bundle $L'$ is trivial, and $w_1(L)=\iota^*[S]$.
Therefore by $(**)$ we have
$$\iota^*w_1(E')-\iota^*w_1(M)=w_1(L')-w_1(L)=\iota^*[S].$$
Also by $(**)$ we have $\iota^*w_1(M)=w_1(L)+w_1(E_2)$,
therefore
$$\iota^*w_2(E')-\iota^*w_2(M)=
\big(w_1(L')-w_1(L)\big)\cdot w_1(E_2)=
\iota^*[S]\cdot w_1(E_2)=
\iota^*[S]^2+\iota^*[S]\cdot \iota^*w_1(M).$$
Thus we have shown that $w_2(E')-w_2(M)=[S]^2+[S]\cdot w_1(M)$.

\begin{figure}[h]
\center{\includegraphics{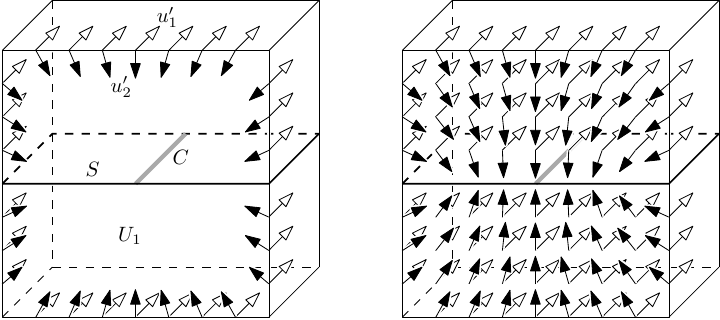}}
\caption{Two sections of $E'|_{\partial U_1}$ and their extensions to $U_1$.}\label{fig:cusp-sections}
\end{figure}

Now let us show that $w_2(E)-w_2(E')=[C]$.
Take two sections $u'_1,u'_2$ of $E'$ over $\partial U_1$ as in Fig.~\ref{fig:cusp-sections}, left
(here we identify $E'$ with $TM$ outside $S$).
Figure~\ref{fig:cusp-sections}, right shows that $u'_1,u'_2$ can be extended to the whole of $U_1$
as a pair of linearly independent sections.

Recall that $E|_{M\setminus U_1}\simeq E'|_{M\setminus U_1}$.
Let $u_1,u_2$ be the sections of $E$ over $\partial U_1$
corresponding to $u'_1|_{\partial U_1}$ and $u'_2|_{\partial U_1}$ under this isomorphism.
It is sufficient to show that general position extensions of $u_1,u_2$ to the whole of $U_1$
has set of linearly dependence $Z\subset U_1$ homologous to $C$.

\begin{figure}[h]
\center{\includegraphics{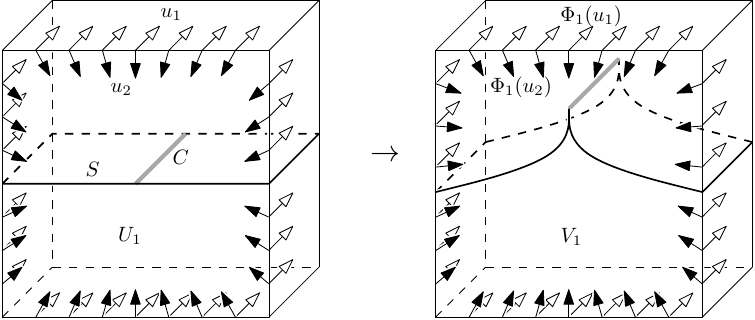}}
\caption{Two sections of $E|_{\partial U_1}$ and their images in $TV_1$.}\label{fig:cusp-sections-map}
\end{figure}

Take a fiber $F$ of the tubular neighborhood $U_1$ over any point of $C\setminus P$.
Apply the natural fiberwise isomorphism $\Phi_1:E|_{U_1}\to TV_1$ covering $\phi_1$
to the sections $u_1,u_2$.

Take the obtained vector fields $\Phi_1(u_1),\Phi_1(u_2)$ on $\partial\phi_1(F)$ (Fig.~\ref{fig:cusp-sections-map}, right)
and extend them as general position vector fields $v_1,v_2$ on the whole of $\phi_1(F)$,
which are linear dependent at one point $p$ (Fig.~\ref{fig:cusp-sections-target}).

%Можно предположить, что $p\notin\phi_1(S\cap U_1)$.
%Тогда $p$ имеет либо $1$, либо $3$ прообраза при~$\phi_1$.

Since $\Phi_1$ is a fiberwise isomorphism,
we have the well-defined map $\Phi_1^{-1}$, assigning to a section of $TV_1$ the corresponding section of $E|_{U_1}$.
Take the extensions $u_1,u_2$ to $F$ equal $\Phi_1^{-1}(v_1),\Phi_1^{-1}(v_2)$.
Note that these extensions are in general position and
they are linearly dependent at odd number of points (since $p$ has $3$ preimages).
Therefore $|Z\cap F|\equiv1\mod2$, so $[Z]=[C]$.
\end{proof}

\begin{figure}[h]
\center{\includegraphics{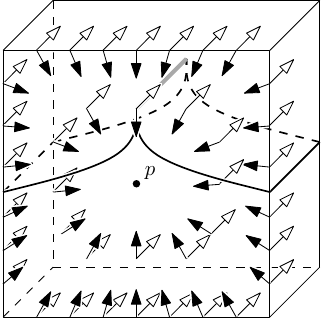}}
\caption{Vector fields $v_1,v_2$ which extends $\Phi_1(u_1),\Phi_1(u_2)$ to $\phi_1(F)$.}\label{fig:cusp-sections-target}
\end{figure}

\subsection{Proof of Theorem~\ref{th:three-manifolds}}\label{s:three-manifolds}

Here we apply the results of this section to deduce Theorem~\ref{th:three-manifolds} from Theorem~\ref{th:h-principle}.

Recall that we have closed 3-manifolds $M,N$,
a closed 2-submanifold $S\subset M$, a closed 1-submanifold $C\subset S$, a discrete subset $P\subset C$
and a continuous map $f:M\to N$.

We have to prove that the map $f$ is homotopic to a generic map $f'$ such that
$\Sigma^{1}(f')=S$, $\Sigma^{1,1}(f')=C$ and $\Sigma^{1,1,1}(f')=P$
if and only if
\begin{enumerate}
\item[(1)] $[S]=w_1(M)+f^*w_1(N)$,
%\item[(2)] $[C]=w_2(M)+w_1(M)\cdot f^*w_1(N)$ and
\item[(2)] $[C]=w_1(M)\cdot[S]$ and
\item[(3)] for every component $C'\subset C$ we have $[C']\cdot[S]\equiv |P\cap C'|\mod2$.
\end{enumerate}

\begin{proof}%[Proof of Theorem~\ref{th:three-manifolds}]
Suppose there is a generic map $f':M\to N$ with prescribed singularities and $f$ is homotopic to~$f'$.
Denote the twisted tangent bundle $T^{\{f'\}}M$ by $E$.

Let us prove conditions (1) and (2).
%Restricting $f'$ to the tubular neighborhoods of $C$ and $S\setminus C$
%we can deduce form Lemma~\ref{l:w-of-t-phi-m} that
Lemma~\ref{l:w-of-t-phi-m} implies that
$w_1(E)=w_1(M)+[S]$ and $w_2(E)=w_2(M)+[S]^2+w_1(M)\cdot[S]+[C]$.
On the other hand, $E\simeq f^*(TN)$.
So $$f^*w_1(N)=w_1(M)+[S] \text{ \ \ and \ \ } f^*w_2(N)=w_2(M)+[S]^2+w_1(M)\cdot[S]+[C].$$
By the Wu formula (see e.\,g.\ \cite[p.\,131]{milnor-stasheff}),
$w_1(M)^2=w_2(M)$ and $w_1(N)^2=w_2(N)$.
Therefore,
%\begin{multline*}
%[C]=w_2(M)+w_1(M)\cdot[S]+[S]^2+f^*w_2(N)=\\
%=w_2(M)+w_1(M)\cdot\big(w_1(M)+f^*w_1(N)\big)+\big(w_1(M)+f^*w_1(N)\big)^2+f^*w_2(N)=\\
%=w_2(M)+w_1(M)\cdot f^*w_1(N).
%\end{multline*}
%
\begin{multline*}
[C]\ =\
f^*w_2(N)+w_2(M)+[S]^2+w_1(M)\cdot[S]\ =\\
\big(f^*w_1(N)\big)^2+\big(w_1(M)\big)^2+[S]^2+w_1(M)\cdot[S]\ =\\
\big(w_1(N)+w_1(M)+[S]\big)^2+w_1(M)\cdot[S]\ =\
w_1(M)\cdot[S].
\end{multline*}

Finally, condition (3) follows from
Definition~\ref{characteristic-field} and Proposition~\ref{pr:swallowtails-parity}.

\vspace{.5em}

Suppose conditions (1-3) hold.
We take the collection of locally compatible maps $\{\phi_1,\phi_2\}$
as in the hypothesis of Lemma~\ref{l:w-of-t-phi-m}.
The existence of such a collection of locally compatible maps
follows from condition (3) and Propositions~\ref{pr:scpnu-exists} and \ref{pr:swallowtails-parity}.

Then by Lemmas~\ref{l:3-bundles-isomorphic} and \ref{l:w-of-t-phi-m} and conditions (1-2)
the vector bundles $T^{\{\phi_1,\phi_2\}}M$ and $f^*(TN)$ are isomorphic.
It remains to apply Theorem~\ref{th:h-principle}.
\end{proof}

\appendix

\section{Appendix. Equivalences of germs}\label{s:appendix-equivalences}

\subsection{Stratumwise equivalence}\label{s:stratumwise-equivalence}

In \S\ref{s:local-global-equivalences} we introduced
several notions of equivalence for germs
of maps $M\to N$ at a locally closed subset $S\subset M$.
In this article we always considered germs of stratified immersions at their skeleta,
so in %many important
our examples $S$ is a stratified subset of positive codimension in $M$
and the map we consider is a stratified immersion with respect to this stratification.

This leads us to another notion of equivalence for germs.

\begin{definition}\label{def:stratumwise-equivalence}
Assume that $S$ is a stratified submanifold.
The $S$-germs of $\phi_1$ and $\phi_2$ are called {\it stratumwise equivalent}
if for every stratum $C\subset S$ the $C$-germs of $\phi_1$ and $\phi_2$ are globally equivalent.

The $S$-germs of $\phi_1$ and $\phi_2$ are called {\it stratumwise $L$-equivalent}
if for every stratum $C\subset S$ the $C$-germs of $\phi_1$ and $\phi_2$ are globally $L$-equivalent.
%if they are stratumwise equivalent and for every stratum the maps $\alpha_1,\alpha_2$
%in Definition~\ref{def:global-equivalence} can be taken to be injections.
\end{definition}

%We prove that

\begin{proposition}\label{pr:equivalences}
The relations introduced in Definitions
\ref{def:global-equivalence}, \ref{def:local-equivalence} and \ref{def:stratumwise-equivalence}
are actually equivalence relations.
\end{proposition}

In order to prove Proposition~\ref{pr:equivalences} we need the following lemma.

\begin{lemma}\label{l:diffeomorphism-to-the-image-locally-closed}
Suppose $S$ is a locally closed subset of an $n$-manifold $U$.
Let $\phi:U\to U$ be a map such that $\phi|_S=\Id_S$ and the differential $d\phi$ is an isomorphism at each point of $S$.
Then there is a neighborhood $S\subset U'\subset U$ such that $\phi|_{U'}$ is a diffeomorphism onto the image.
\end{lemma}

\begin{proof}
First we suppose that $S$ is compact.
Note that $d\phi$ is an isomorphism over $\Op(S)$.
Suppose there does not exist a neighborhood $U'$ as in the lemma.
Then for every $k=1,2,\ldots$ there are two points $x_k\ne y_k$
which are $<\frac1k$ away from $S$ and such that $\phi(x_k)=\phi(y_k)$.
%Replace $\{x_k\}$ and $\{y_k\}$ by convergent subsequences.
By choosing subsequences we may assume that $\{x_k\}$ and $\{y_k\}$ are convergent.
Their limits lie in $S$.
If the limits coincide, then, since $d\phi$ is an isomorphism at this point, we have a contradiction with the inverse function theorem.
If the limits do not coincide, then we have a contradiction with injectivity of $\phi$ on $S$.

Now we suppose that $S$ is closed but possibly noncompact.
Then take a proper map $\xi:U\to\R_{\ge0}$.
For any $k\in\Z_{\ge0}$ the set $\xi^{-1}([k-1;k+2])\cap S$ is compact.
So there is a neighborhood $(\xi^{-1}([k-1;k+2])\cap S)\subset U_k\subset U$
such that $\phi|_{U_k}$ is a diffeomorphism onto the image.
We set
$$
W_k= U_{k-1}\cap U_k\cap U_{k+1}\cap\phi^{-1}(\xi^{-1}((k-\textstyle\frac12;k+\frac32)))
$$
and then let $U'=\bigcup W_k$.
Clearly, $U'\subset U$ is open.
Also $U'\supset S$ since $W_k\supset\xi^{-1}([k;k+1])\cap S$.
The differential $d\phi$ is an isomorphism on $U'$.
Let us show that $\phi|_{U'}$ is injective.

Take any $x\in W_k$.
Then $\xi(\phi(x))\in(k-\frac12;k+\frac32)$.
Therefore if $\phi(y)=\phi(x)$, then either $y\in W_{k-1}$, or $y\in W_k$, or $y\in W_{k+1}$.
In each of these cases we see that
%$x,y\in U_i$ for
%%some $i$ (where
%$i=k$ in the first and the second cases and $i=k+1$ in the third case.
%But
$x,y\in U_k$, but
%$\phi|_{U_i}$ is injective, so $y=x$.
$\phi|_{U_k}$ is injective, so $y=x$.

Finally, suppose that $S\subset U$ is an arbitrary locally closed subset.
Then the set $\tilde U=U\setminus(\overline S\setminus S)$ is open in $U$.
It remains to apply the same reasoning as above to the closed subset $S\subset\tilde U$.
\end{proof}

\begin{proof}[Proof of Proposition~\ref{pr:equivalences}]
Reflexivity and symmetry are evident.
Next we will show transitivity for the global equivalence.
The proof for the other
%notions of equivalence
five relations
is similar.

Suppose we are given maps $\phi_1:U_1\to V_1$, $\phi_2:U_2\to V_2$ and $\phi_3:U_3\to V_3$
where $U_1,U_2,U_3\supset S$ are neighborhoods in $U$ and $V_1,V_2,V_3$ are $n$-manifolds.
Suppose that the $S$-germ of $\phi_1$ is globally equivalent to the $S$-germ of $\phi_2$
and the latter is globally equivalent to the $S$-germ of $\phi_3$.
In more detail, we have the following commutative diagram:
$$
\xymatrix{
& U_{1,2}\ar@{->}[dl]_{\alpha_1}\ar@{->}[dr]^{\alpha_2}\ar@{->}[dd]_{\phi_{1,2}} &
& U_{2,3}\ar@{->}[dl]_{\alpha'_2}\ar@{->}[dr]^{\alpha'_3}\ar@{->}[dd]_{\phi_{2,3}} & \\
U_1\ar@{->}[dd]_(0.3){\phi_1} & &
U_2\ar@{->}[dd]_(0.3){\phi_2} & &
U_3\ar@{->}[dd]_(0.3){\phi_3} \\
& V_{1,2}\ar@{->}[dl]_{\beta_1}\ar@{->}[dr]^{\beta_2} &
& V_{2,3}\ar@{->}[dl]_{\beta'_2}\ar@{->}[dr]^{\beta'_3} & \\
V_1 & & V_2 & & V_3 \\
}
$$
Here $U_{1,2},U_{2,3}\supset S$ are neighborhoods and $V_{1,2},V_{2,3}$ are $n$-manifolds.
The maps $\alpha_1,\alpha_2,\alpha'_2,\alpha'_3$ are immersions which are the identity on $S$.
By Lemma~\ref{l:diffeomorphism-to-the-image-locally-closed} they can assumed to be diffeomorphisms onto the image
if $U_{1,2}$ and $U_{2,3}$ are sufficiently close to $S$.
The maps $\beta_1,\beta_2,\beta'_2,\beta'_3$ are arbitrary immersions.

Take the fiber product $U_{1,2,3}=U_{1,2}\times_{U_2}U_{2,3}$.
The differentials of $\alpha_2$ and $\alpha'_2$ are isomorphisms,
so $U_{1,2,3}$ is a manifold
and the maps $\alpha_{1,2}:U_{1,2,3}\to U_{1,2}$ and $\alpha_{2,3}:U_{1,2,3}\to U_{2,3}$ are immersions.
Since $\alpha_2$ and $\alpha'_2$ are injective, the same is true for $\alpha_{1,2}$ and $\alpha_{2,3}$.
Therefore we can assume $U_{1,2,3}$ imbedded into $M$.
Then $\alpha_{1,2}$ and $\alpha_{2,3}$ are the identity on $S$.

Similarly, we take the fiber product $V_{1,2,3}=V_{1,2}\times_{V_2}V_{2,3}$
(it is a manifold since the differentials of $\beta_2$ and $\beta'_2$ are isomorphisms)
%if $V_{1,2}$ and $V_{2,3}$ are assumed to be sufficiently close to $\phi_{1,2}(S)$ and $\phi_{2,3}(S)$, respectively)
and define immersions $\beta_{1,2}:V_{1,2,3}\to V_{1,2}$ and $\beta_{2,3}:V_{1,2,3}\to V_{2,3}$.
We obtain the following commutative diagram:

$$
\xymatrix{
& & & & U_{1,2,3}\ar@{->}[dll]_{\alpha_{1,2}}\ar@{->}[dr]^{\alpha_{2,3}}\ar@{-->}[ddd]_(0.3){\phi_{1,2,3}} & & \\
& & U_{1,2}\ar@{->}[dll]_{\alpha_1}\ar@{->}[dr]^{\alpha_2}\ar@{->}[ddd]_{\phi_{1,2}} &
& & U_{2,3}\ar@{->}[dll]_(0.3){\alpha'_2}\ar@{->}[dr]^{\alpha'_3}\ar@{->}[ddd]_{\phi_{2,3}} & \\
U_1\ar@{->}[ddd]_{\phi_1} & & &
U_2\ar@{->}[ddd]_(0.3){\phi_2} & & &
U_3\ar@{->}[ddd]_{\phi_3} \\
& & & & V_{1,2,3}\ar@{->}[dll]_(0.3){\beta_{1,2}}\ar@{->}[dr]^{\beta_{2,3}} & & \\
& & V_{1,2}\ar@{->}[dll]_{\beta_1}\ar@{->}[dr]^{\beta_2} &
& & V_{2,3}\ar@{->}[dll]_(0.3){\beta'_2}\ar@{->}[dr]^{\beta'_3} & \\
V_1 & & & V_2 & & & V_3 \\
}
$$

%The universal property of $V_{1,2,3}$
By the pullback property of $V_{1,2,3}$
%implies that
there exists a unique map $\phi_{1,2,3}$ as in the diagram.
Together with the compositions
$\alpha_1\circ\alpha_{1,2}$,
$\alpha'_3\circ\alpha_{2,3}$,
$\beta_1\circ\beta_{1,2}$ and
$\beta'_3\circ\beta_{2,3}$
it establishes a global equivalence of the $S$-germs of $\phi_1$ and $\phi_3$.
\end{proof}

\subsection{Relations between the notions of equivalence}\label{s:equivalence-implications}

Note that the equivalences
%described in \S\ref{s:equivalence-of-germs}
have trivial reductions:
globally $L$-equivalent germs are always globally equivalent,
stratumwise equivalent germs are always locally equivalent etc.

To show that the converse implications do not hold in several cases
we will use the following assertion.

\begin{proposition}\label{pr:glued-pairs}
Let $X,Y,Z$ be topological spaces.
Let $g:X\to Y$ and $h:Y\to Z$ be continuous maps and
suppose $h$ is local homeomorphism. %(но не обязательно накрытие).
%Пусть $f=h\circ g$.
Take a path-connected subset $W\subset X^2$
such that for every $(x,y)\in W$ we have $h(g(x))=h(g(y))$.
Then if $g(x_0)=g(y_0)$ for some $(x_0,y_0)\in W$, then $g(x)=g(y)$ for every $(x,y)\in W$.
\end{proposition}

\begin{proof}
Suppose there is $(x_1,y_1)\in W$ such that $g(x_1)\ne g(y_1)$.
Since $W$ is path-connected, there is a pair of paths $\gamma_1,\gamma_2:[0;1]\to X$
such that $(\gamma_1(t),\gamma_2(t))$ is a path in $W$ from $(x_0,y_0)$ to $(x_1,y_1)$.

Mark all $s\in[0;1]$ such that $g(\gamma_1(t))=g(\gamma_2(t))$ for every $t\in[0;s]$.
Let $s_0$ be the supremum of such $s$.
By continuity, $g(\gamma_1(s_0))=g(\gamma_2(s_0))$.

On the one hand, $h(g(\gamma_1(t)))=h(g(\gamma_2(t)))$ for all $t\in[0;1]$ by the definition of $W$.
On the other hand, one can choose $t>s_0$ arbitrarily close to $s_0$ such that $g(\gamma_1(t))\ne g(\gamma_2(t))$.
This contradicts the fact that the restriction of $h$ to some neighborhood of $g(\gamma_1(s_0))$ is injective.
\end{proof}

\begin{figure}[h]
\center{\includegraphics{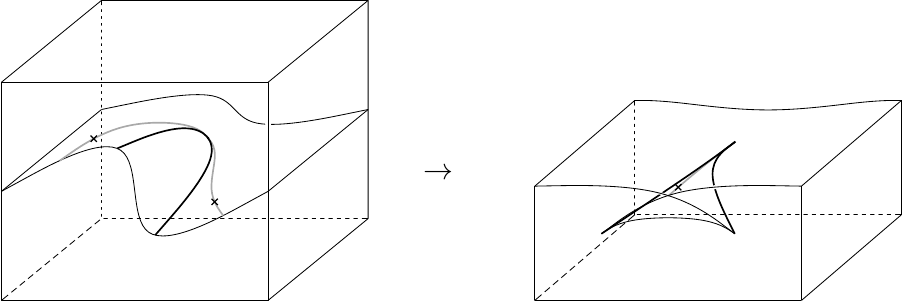}}
\caption{Fold points with the same image near the swallowtail singularity.}\label{fig:swallowtail}
\end{figure}

\begin{remark}\label{r:stratumwise-not-globally}
For germs of stratified immersions at their skeleta,
stratumwise $L$-equivalent germs (and hence stratumwise equivalent germs)
need not be globally equivalent (and moreover, need not be globally $L$-equivalent).

For example, take a standard map $\phi:\R^3\to\R^3$ with a swallowtail singularity $\Sigma^{1,1,1,0}$.
Then the germ of $\phi$ at $S=\Sigma(\phi)$ considered up to global equivalence
has an invariant: the curve of double points of $S$,
see Fig.~\ref{fig:swallowtail}.
%More precisely, this curve consists of $\Sigma^{1,1,1,0}(\phi)$

Let us construct a map $\phi':\R^3\to\R^3$ which is stratumwise $L$-equivalent to $\phi$.
Take a pair of points $x,y\in S$ with the same image as in Fig.~\ref{fig:swallowtail}.
Take a small ball $\phi(x)\in V\subset \R^3$ and
a diffeomorphism $\psi:V\to V$ which is identity on $\Op(\partial V)$, but not identity at $\phi(x)$.
Let $x\in U_x\subset\R^3$ be a neighborhood including exactly one of the components of $\phi^{-1}(V)$.
Then we set $\phi'=\phi$ outside $U_x\cap\phi^{-1}(V)$ and $\phi'=\psi\circ\phi$ on $U_x\cap\phi^{-1}(V)$.

Proposition~\ref{pr:glued-pairs} implies that the $S$-germs of $\phi$ and $\phi'$ are not globally equivalent
since $\phi(x)=\phi(y)$ and $\phi'(x)\ne\phi'(y)$.
On the other hand, they are stratumwise $L$-equivalent
(indeed, their $\Sigma^{1,0}$-germs are globally $L$-equivalent
to normal maps, which have no double points at the critical locus).

\end{remark}

\begin{remark}\label{r:equivalent-not-l-equivalent}
Globally equivalent germs
(and, so, stratumwise equivalent and locally equivalent)
may not even be locally $L$-equivalent
(and, moreover, may be not stratumwise $L$-equivalent and not globally $L$-equivalent),
because the $L$-equivalence ``remembers'' pairs of regular points near $S$ with the same image.
For example, the germs of maps $\R^2\to\R^2$, defined as $(x,y)\mapsto(x^2,y)$ and $(x,y)\mapsto(x^2,x+y)$,
at their critical locus $x=0$
are globally equivalent but even not locally $L$-equivalent.
%
%Аналогичным образом, эквивалентность вдоль стратов строго более слабая, чем $L$-эквивалентность вдоль стратов,
%а локальная эквивалентность строго более слабая, чем локальная $L$-эквивалентность.
\end{remark}

\begin{remark}\label{r:locally-not-stratumwise}
In general, locally equivalent germs may not be stratumwise equivalent.
For example, suppose $U=V_1=S^1\times[-1;1]$ and $V_2$ is the M\"{o}bius band.
We set the maps $\phi_1:U\to V_1$ and $\phi_2:U\to V_2$ to be the compositions of the projection to the middle line $U\to S^1$
with the imbeddings of $S^1$ as the middle line into $V_1$ and $V_2$, respectively.
Let $S=S^1\times0$ have only one stratum.
Note that $\phi_1|_S$ and $\phi_2|_S$ are immersions.
Then the $S$-germs of $\phi_1$ and $\phi_2$ are locally $L$-equivalent but not stratumwise equivalent.
\end{remark}

%However, for germs of stratified immersions at their skeleta
%%if $\phi_1,\phi_2$ are generic and $S=\Sigma(\phi_1)=\Sigma(\phi_2)$ with Thom-Boardman stratification, then
%the following assertion holds (it will be proved in \S\ref{s:if-locally-then-stratumwise}).

\begin{proposition}\label{pr:if-locally-then-stratumwise}
Let $U$, $V$ and $V'$ be $n$-manifolds %without boundary
and let $\phi:U\to V$ and $\phi':U\to V'$ be maps.
%generic maps with
%the same set of critical points $S\subset U$.
%$\Sigma(\phi)=\Sigma(\phi')=S$.
Suppose that $\phi$ is a stratified immersion with the skeleton $S$
and that the $S$-germs of $\phi$ and $\phi'$ are locally $L$-equivalent.
Then these germs %of $\phi$ and $\phi'$ at $S$
are stratumwise $L$-equivalent.
\end{proposition}

\begin{proof}
%[Proof of Proposition~\ref{pr:if-locally-then-stratumwise}]
%We need just to apply Lemma~\ref{l:if-locally-then-at-stratum} to each stratum of $S$.
Take a stratum $C\subset S$.
We need to prove that $C$-germs of $\phi$ and $\phi'$ are globally $L$-equivalent.
We will use a few results of \S\ref{s:local}.
By Lemma~\ref{l:generic-normal-form} the $C$-germ of $\phi$ has a normal form $\phi''$.
Then by Theorem~\ref{th:if-locally-then-at-stratum} the $C$-germs of $\phi''$ and $\phi'$ are globally $L$-equivalent.
It remains to use the transitivity of globally $L$-equivalence of $C$-germs
(Proposition~\ref{pr:equivalences})
or the universal property of a normal form
(Proposition~\ref{pr:normal-form-universal-property}).
\end{proof}

We have shown that there are the following implications between the equivalences
(implications which may violate in general
but always hold for germs of stratified immersions at their skeleta
are denoted by thin arrows):

%For germs of generic maps at their singular loci
%there are the following relations between the notions of equivalence:
%
$$
\xymatrix{
  \text{global equivalence}\ \ar@{=>}[d]    \ar@{<=}[r]
  & \ \text{global }L\text{-equivalence} \ar@{=>}[d] \\
  \text{stratumwise equivalence}\ \ar@{=>}[d] \ar@{<-}[dr] \ar@{<=}[r]
  & \ \text{stratumwise }L\text{-equivalence} \ar@{=>}[d] \\
  \ar@/_-1pc/@{..>}[u]^{???}\text{local equivalence}   \ar@{<=}[r]
  & \ \text{local }L\text{-equivalence} \ar@/_1pc/@{->}[u]_{\text{Proposition}~\ref{pr:if-locally-then-stratumwise}}
}
$$

In this diagram every arrow that goes left or down correspond to trivial reductions.
A~missing up or right arrow means that there is no implication,
%between the equivalences (for generic germs).
%The argument that one can use to prove it is
see remarks
%\ref{r:stratumwise-not-globally}, \ref{r:equivalent-not-l-equivalent} и \ref{l:if-locally-then-stratumwise}.
above.

\begin{question}
Does the stratumwise equivalence of germs of stratified immersions at their skeleta follow from the local equivalence?
\end{question}

We do not know the answer to this question,
so we used only the $L$-versions of the local and stratumwise equivalences.

%\newpage

\end{document}